\newcommand{\bx}{{\mathbf x}}
\newcommand{\by}{{\mathbf y}}
\newcommand{\bz}{{\mathbf z}}
\newcommand{\bu}{{\mathbf u}}
\newcommand{\bv}{{\mathbf v}}
\newcommand{\bw}{{\mathbf w}}
\newcommand{\bm}{{\mathbf m}}
\def\scrM{\mathscr{M}}
\def\scrN{\mathscr{N}}
\def\scrW{\mathscr{W}}
\renewcommand{\geq}{\geqslant}
\renewcommand{\leq}{\leqslant}
\newcommand{\D}{\mbox{\textnormal{d}}}
\newcommand\dbx{{\D}\bx}
\newcommand\dby{{\D}\by}
\newcommand\dbv{{\D}\bv}
\newcommand\dbw{{\D}\bw}
\newcommand\dbz{{\D}\bz}
\newtheorem{thm}{Theorem}
\newtheorem{lem}{Lemma}
\newtheorem{rmk}{Remark}
\newtheorem{exm}{Example}
\numberwithin{equation}{section}
\numberwithin{thm}{section}
\numberwithin{defn}{section}
\numberwithin{lem}{section}
\numberwithin{pro}{section}
\numberwithin{rmk}{section}
\numberwithin{exm}{section}
\numberwithin{Q}{section}
\newcommand{\rr}{\mathbb{R}}
\newcommand{\rt}{\mathbb{T}}
\newcommand{\lan}{\langle}
\newcommand{\ran}{\rangle}
\newcommand{\be}{\begin{eqnarray*}}
\newcommand{\bel}{\begin{eqnarray}}
\newcommand{\ee}{\end{eqnarray*}}
\newcommand{\eel}{\end{eqnarray}}
\newcommand{\ba}{\begin{aligned}}
\newcommand{\ea}{\end{aligned}}
\newcommand{\al}{\alpha}
\newcommand{\bet}{\beta}
\newcommand{\gam}{\gamma}
\newcommand{\na}{\nabla}
\newcommand{\pa}{\partial}
\def\d{{\mbox{\text{d}}}}
\def\e{{\sf{e}}}
\DeclareMathOperator{\trace}{Tr}
\DeclareMathOperator{\diver}{div}
\def\hf{\frac{1}{2}}
\def\dt{\text{d}t}
\def\ddt{\frac{\text{d}}{\dt}}
\def\etaS{\eta} 
\def\cm{c_-}
\def\u{{\mathbf u}}
\def\bs{{\mathbf s}}
\def\DelMP{\Delta_{\scrM} \Phi} 
\def\DelNP{\Delta_{\scrN} \Phi} 
\def\DelMA{\Delta_{\scrM} \APhi}
\def\DelW{\Delta_{\scrW}} 
\def\DelPh{\Delta \Phi}
\def\delD{\delta D} 
\def\delV{\delta V} 
\def\delbu{\delta {\mathbf u}}
\def\tc{}
\def\St{D(t)}  
\def\wrho{w}
\def\APhi{A}
\def\aphi{a}
\def\ux{{\sf x}}
\def\bux{{{\bf x}}}
\def\Min{M}
\newcommand{\dzA}[1]{{\mathrm deg}_{#1}(\APhi)}
\newcommand{\dzPD}[1]{{\mathrm deg}_{#1}(\Phi(D(t)))}
\newcommand\delE{\delta E} 
\newcommand{\game}[1]{\mbox{}}
\newcommand\delEz{\delE_0}
\newcommand\delVz{\delV_0}
\newcommand\zetaM{\zeta_{{}_\scrM}}
\newcommand\buinf{\overline{\bu}_\infty}
\newcommand\bvinf{\overline{\bv}_\infty}
\newcommand\rate{\nu} 
\newcommand{\test}{\varphi} 
\begin{document}

\title[A game of alignment: collective behavior of multi-species]{A game of alignment:\\collective behavior of multi-species}

\author{Siming He}
\address{Siming He\newline Department of Mathematics\newline Duke University, Durham NC}
\email{simhe@duke.edu}

\author{Eitan Tadmor}
\address{Eitan Tadmor\newline Department of Mathematics, Institute for Physical Sciences \& Technology \newline and \newline Center for Scientific Computation and Mathematical Modeling (CSCAMM)\newline University of Maryland, College Park, MD}
\email{tadmor@umd.edu}

\date{\today}

\subjclass{92D25, 35Q35, 76N10}

\keywords{collective behavior, emergent dynamics, alignment,  flocking, aggregation, multi-species, connected graph, weighted Poincar\'e inequality.}


\vspace*{-1.1cm}

\begin{abstract} 
We study the (hydro-)dynamics of multi-species driven by alignment.  What distinguishes the different  species  is the protocol of their interaction with the rest of the crowd: the collective motion is described by different \emph{communication kernels}, $\phi_{\alpha\beta}$,  between the crowds in species $\alpha$ and $\beta$. We show that flocking of the overall crowd emerges provided the communication array between species forms a \emph{connected graph}. 
In particular,  the crowd within each species need \emph{not}  interact with its own kind, i.e., $\phi_{\al\al}=0$; different species which are  engaged  in such `game' of alignment require a connecting path for propagation of information which will lead to the flocking of overall crowd.
The same methodology applies to multi-species aggregation dynamics governed by first-order alignment: connectivity implies concentration around an emerging consensus.
\end{abstract}

\maketitle
\setcounter{tocdepth}{1}
\vspace*{-1.1cm}
\tableofcontents

\section{Multi-species dynamics --- statement of main results}

\subsection{The (hydro-)dynamics of multi-species}
We study the (hydro-)dynamics of multi-species driven by environmental averaging. The `environment' consists of agents, each is identified by  a position/velocity pair
$(\bx^i_\al,\bv^i_\al)\in (\rr^d, \rr^d)$. The indexing $\{\cdot\}_\al^i$ signifies  agent ``$i$'' in a species ``$\alpha$''. What distinguishes one species from another is the way they interact with the environment: let $\phi_{\alpha\beta}\geq 0$ be the \emph{communication kernel} between species $\alpha$ and $\beta$, then the dynamics describes the collective motion of agents, each of which aligns its velocity to a \emph{weighted average} of  velocities of neighboring agents --- both from its own as well as other species,
\[
\left\{
\begin{split}
& \ \ \dot{\bx}^i_\al=\bv_\al^i,\\
& \ \ \dot{\bv}_\al^i=\sum_{\beta\in \mathcal{I}}\frac{1}{N_\beta}\sum_{j=1}^{N_\beta} \phi_{\alpha\beta}(\tc|\bx^j_\beta-\bx^i_\al|)(\bv^j_\beta-\bv^i_\al),
\end{split}
\right.
 \qquad i\in 1,2,..., N_\al, \quad \al\in\mathcal I,
\]
 subject to initial data $(\bx^i_\al, \bv^i_\al)\big|_{t=0}=(\bx_{{\al}0}^i,\bv_{{\al}0}^i)$. Here $N_\alpha$ is the size of the species $\al\in {\mathcal I}$, where ${\mathcal I}$ is a (possibly infinite) index set for the different species. The large-crowd dynamics, 
 $N_{\alpha\in{\mathcal I}}\gg 1$,  is captured by the hydrodynamic description\footnote{Unless otherwise stated, all integrals are taken over ${\mathbb R}^d$.}, consult section \ref{sec:hydrodynamics},
\begin{equation}\label{Hydrodynamic_Flocking_eqs}
\left\{
\begin{split}
& \ \  \pa_t \rho_\al +\na\cdot(\bu_\al\rho_\al)=0; \\
& \ \  \pa_t (\rho_\al \bu_\al)+\na\cdot (\rho_\al \bu_\al\otimes \bu_\al)=\sum_{\beta\in \mathcal{I}} \int \hspace*{-0.1cm}\phi_{\alpha\beta}(\tc|\bx-\by|)\big(\bu_\beta(\by)-\bu_\alpha(\bx)\big)\rho_\alpha(\bx)\rho_\beta(\by)\dby. 
\end{split}\right. 
\end{equation}
Each of the different species is identified by a pair of density/velocity $(\rho_\al, \bu_\al)$, subject to initial condition $(\rho_\al,\bu_\al)\big|_{t=0}=(\rho_{{\al}0},\bu_{{\al}0})\in L_+^1(\rr^d)\times W^{1,\infty}(\rr^d),\enskip \forall\al \in \mathcal{I}$. 
There are two extreme cases: when $\phi_{\alpha\beta}\equiv \phi$ the crowd consists of a single species driven by the same communication kernel
\[
\left\{
\begin{split}
& \ \ \pa_t\rho+\na\cdot(\rho \bu) =0, \\    
& \ \  \pa_t(\rho\bu)+ \nabla\cdot(\rho\bu\otimes \bu)  =\int \phi(|\bx-\by|)(\bu(t,\by)-\bu(t,\bx))\rho(t,\bx)\rho(t,\by)\dby.
\end{split}
\right.
\]
For the large literature on the single species hydrodynamics (as well as discrete dynamics), we refer to \cite{bellomo2017-19active} and the references therein.
 When $\phi_{\alpha\beta}=\phi\delta_{\alpha\beta}$, the crowd of \eqref{Hydrodynamic_Flocking_eqs} splits into independent species driven by the same communication kernel, thus we end up with identical copies of the single species dynamics.  
In this paper we study all the intermediate cases which involve  a genuine \emph{multi}-species dynamics, driven by \emph{symmetric} communication array of radial decreasing kernels, 
$\Phi=\{\phi_{\al\bet}\}$,
\begin{equation}\label{eq:symm}
\phi_{\al\bet}=\phi_{\bet\al}\geq 0, \qquad \phi_{\al\bet} \ \text{are radial and decreasing}.
\end{equation}

\subsection{Smooth solutions must flock}
Recall that the long time behavior for the single-species model  
is dictated by the  communication kernel  $\phi$, \cite{TanTadmor, HeTadmor17}: if the communication kernel $\phi$ admits a Pareto-type `fat-tail'' decay\footnote{And in a slightly more general setup --- if $\phi$ is global  in the sense that $\displaystyle \int^\infty \hspace*{-0.2cm}\phi(r)\D r = \infty$.}, $\phi(r) \gtrsim (1+r)^{-\theta}$ with $\theta \leq 1$ ,  then ``smooth solutions must flock'', namely, strong solutions of the single-species model exhibit \emph{flocking behavior} as $\displaystyle \max_{\bx\in \mathrm{supp}\,\{\rho(t,\cdot)\}}|\bu(t,\bx)-\buinf|  \stackrel{t\rightarrow \infty}{\longrightarrow}0$.

This brings us to our first main result regarding the large-time behavior of the multi-species dynamics. Let $\Phi(r):=\{\phi_{\al\bet}(r)\}_{\al,\bet\in{\mathcal I}}$ denote the array of communication kernels associated with  \eqref{Hydrodynamic_Flocking_eqs}. The main feature here is that flocking of multi-species dynamics does \emph{not} require direct, global communication among all species --- we allow $\phi_{\al\bet}(r)$ to vanish, indicating lack of communication between some  species $\al$ and $\bet$.
Instead, what matters is a minimal requirement  that the communication among  species forms a  \emph{connected network} in the sense that there is a connecting path which propagates the  information of alignment between every pair of species. To this end, we introduce the \emph{weighted} graph Laplacian associated with $\Phi(r)$,
\begin{equation}\label{eq:PhiLap}
(\DelMP(r))_{\al\bet}:= \left\{\begin{array}{ll} -\phi_{\al\bet}(r)\sqrt{M_{\al}M_\bet}, & \al\neq \bet;\\ \\
{\displaystyle \mathop{\sum}_{\gamma\neq \al}}\phi_{\al\gamma}(r)M_\gamma, & \al=\bet,\end{array}\right. 
\end{equation}
where  the weights,  $\scrM:=\{M_\al\}_{\al\in{\mathcal I}}$, consist  of the masses of the different species which are constant in time,
\[
M_\al:=\int \rho_{\al 0}(\bx)\dbx \equiv \int \rho_\al(t,\bx)\dbx>0.
\]
Its properties  are outlined in section \ref{sec:wFiedler} below. In particular, the communication array $\Phi(r)$ forms a connected graph as long as  its second eigenvalue $\lambda_2\big(\DelMP(r)\big)>0$.  Our main result shows that inter-species connectivity implies the flocking behavior of the whole crowd.
 
\begin{thm}[{\bf Strong solutions must flock}]\label{THM_1}\mbox{}\newline
Let $(\rho_{{\al}}(t,\cdot),\bu_{{\al}}(t,\cdot))\in (L^\infty\cap L_+^{1}(\rr^d))\times W^{1,\infty}(\rr^d), \ \al \in \mathcal{I}$ be a strong solution of the multi-species dynamics  \eqref{Hydrodynamic_Flocking_eqs}, subject to compactly supported  initial conditions $(\rho_{\al 0},\bu_{\al 0})$ with finite velocity fluctuations
\[
\delVz:=\max_{\al,\bet\in{\mathcal I}}\sup_{\bx,\by\in{S_0}}|\bu_{\al 0}(\bx)-\bu_{\bet 0}(\by)| <\infty, \qquad {\mathcal S}_0:= \cup_\al \text{supp}\{\rho_{\al 0}(\cdot)\}.
\]
Assume that the communication array $\Phi(r)=\{\phi_{\al\bet}(r)\}_{\al,\bet\in{\mathcal I}}$ satisfies a Pareto-type `fat-tail' connectivity condition
\begin{align}\label{eq:Pareto}
\lambda_2(\DelMP(r)) \gtrsim \frac{1}{(1+r)^\theta}, \ \ \theta<1. 
\end{align}
Then the support, $\displaystyle {\mathcal S}(t):= \cup_\al \textrm{supp}\{\rho_\al(t,\cdot)\}$, remains within a finite diameter $D_\infty< \infty$ (depending on $1-\theta, M, \delVz$), and the different species  flock towards a limiting velocity  $\buinf$, 
\begin{equation}\label{eq:revisit}
\sum_{\al\in\mathcal{I}}\int |\bu_\alpha(t,\bx)-\buinf|^2\rho_\alpha(t,\bx)\d\bx \leq \sum_{\al\in\mathcal{I}}\int |\bu_{\alpha 0}(\bx)-\buinf|^2\rho_{\alpha 0}(\bx)\d\bx \cdot e^{\displaystyle -2\rate t}, 
\end{equation}
at exponential rate, $\rate$, dictated by the spatial scale $D_\infty$
\[
\rate = \zetaM \lambda_2(\DelMP_\infty) \gtrsim \frac{\zetaM}{(1+D_\infty)^\theta},
\qquad \Phi_\infty:=\Phi(D_\infty), \ \ \zetaM:=1-\frac{\max_\al M_\al}{\sum_\al M_\al}>0.
\]
\end{thm}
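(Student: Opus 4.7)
The strategy follows the standard flocking blueprint --- energy dissipation, pointwise monotonicity of the kernels, and a Lyapunov bootstrap on the diameter --- but with the scalar kernel $\phi$ of the single-species theory \cite{TanTadmor,HeTadmor17} replaced by the mass-weighted graph Laplacian $\DelMP$ and its Fiedler gap $\lambda_2(\DelMP)$ as the fundamental rate. Symmetry of the communication array \eqref{eq:symm} together with \eqref{Hydrodynamic_Flocking_eqs} conserves the total momentum, so $\buinf := M^{-1}\sum_\al \int \bu_\al\rho_\al\,\dbx$ is time-independent with $M=\sum_\al M_\al$. I define the velocity fluctuation $\delE(t):=\sum_\al\int|\bu_\al-\buinf|^2\rho_\al\,\dbx$, differentiate along \eqref{Hydrodynamic_Flocking_eqs}, and symmetrize $(\al,\bx)\leftrightarrow(\bet,\by)$ to arrive at the standard dissipation identity
\[
\ddt\delE(t) = -\sum_{\al,\bet}\int\!\!\int \phi_{\al\bet}(|\bx-\by|)\,|\bu_\al(\bx)-\bu_\bet(\by)|^2\,\rho_\al(\bx)\rho_\bet(\by)\,\dbx\,\dby.
\]

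The heart of the argument is a weighted Poincar\'e inequality for $\DelMP$. Monotonicity of the radial kernels replaces $\phi_{\al\bet}(|\bx-\by|)$ by $\phi_{\al\bet}(D(t))$ on $\cup_\al\supp\{\rho_\al(t,\cdot)\}$. Writing $\bu_\al=U_\al+\bu'_\al$ with $U_\al:=M_\al^{-1}\int\bu_\al\rho_\al$ separates the inter-species mean differences $|U_\al-U_\bet|$ (activated by the Fiedler bound for $\DelMP(D(t))$, whose null vector is $(\sqrt{M_\al})_\al$) from the intra-species fluctuations $\bu'_\al$ (dissipated through the diagonal entries $(\DelMP(D(t)))_{\al\al}=\sum_{\gamma\neq\al}\phi_{\al\gamma}(D(t))M_\gamma$). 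Balancing the two contributions using the weighted Poincar\'e inequality developed in the paper's section on $\DelMP$ delivers the coercive bound
\[
\ddt\delE(t) \leq -2\zetaM\,\lambda_2(\DelMP(D(t)))\,\delE(t),
\]
with the factor $\zetaM=1-\max_\al M_\al/M$ quantifying the worst-case spectral loss when one species carries most of the mass.

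It remains to propagate a finite diameter. Along characteristics $\ddt D(t)\leq 2\delV(t)$ with $\delV(t)\lesssim \sqrt{\delE(t)/\min_\al M_\al}$ via Cauchy--Schwarz against the densities. Defining the Lyapunov functional $\mathcal L(t):=\sqrt{\delE(t)}+c\int_0^{D(t)}\sqrt{\zetaM\,\lambda_2(\DelMP(r))}\,\d r$ for a small absolute constant $c$, the differential inequalities of the previous paragraph make $\mathcal L$ non-increasing. The Pareto tail \eqref{eq:Pareto} with $\theta<1$ forces the primitive $r\mapsto \int_0^r\sqrt{\lambda_2(\DelMP)}$ to diverge, so $\mathcal L(0)<\infty$ produces the uniform bound $D(t)\leq D_\infty = D_\infty(1-\theta,M,\delVz)$. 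Feeding this back into the coercive bound gives the uniform rate $\rate=\zetaM\lambda_2(\DelMP_\infty)$, and Gr\"onwall delivers \eqref{eq:revisit}. The main obstacle is unquestionably the weighted Poincar\'e step: one must convert a spatial quadratic form into a bound involving $\zetaM\lambda_2(\DelMP)$ that simultaneously controls two distinct mechanisms --- the Fiedler gap for means and the diagonal entries for fluctuations --- with the sharp constant $\zetaM$ emerging precisely from their reconciliation. Everything downstream is careful but routine bookkeeping of the bootstrap constants so that the Pareto threshold $\theta<1$ exactly closes the estimate.
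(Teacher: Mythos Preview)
Your energy-dissipation identity and the application of the weighted Poincar\'e inequality are essentially correct and match the paper's Theorem~\ref{THM_4_1}; the bound $\ddt\delE(t)\leq -2\zetaM\lambda_2(\DelMP(D(t)))\,\delE(t)$ is indeed the core coercive estimate. The difficulty is downstream, and it is a genuine gap.

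The claim $\delta V(t)\lesssim\sqrt{\delE(t)/\min_\al M_\al}$ ``via Cauchy--Schwarz against the densities'' does not hold. The diameter $D(t)$ is governed by characteristics at the \emph{edge} of $\cup_\al\supp\{\rho_\al(t,\cdot)\}$, so $\ddt D(t)$ is controlled by the $L^\infty$ velocity spread $\delta V(\bu(t))$, while $\delE(t)$ is a density-weighted $L^2$ quantity that carries no information at points where $\rho_\al$ is vanishingly small. Cauchy--Schwarz runs the opposite direction, $\delE(t)\lesssim M\,(\delta V(t))^2$, and nothing in the hypotheses (compact support, $\bu_\al\in W^{1,\infty}$, no lower bound on $\rho_\al$) lets you reverse it. Consequently $\mathcal L$ is not monotone and the bootstrap does not close. (Even at the formal level, for $\ddt\mathcal L\leq 0$ to follow from the two differential inequalities the integrand should be $\zetaM\lambda_2(\DelMP(r))$ rather than its square root; with $\sqrt{\lambda_2}$ the cancellation fails.)

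The paper circumvents this $L^2$-to-$L^\infty$ obstruction by a separate pointwise analysis (Theorem~\ref{THM_4_2}): projecting the momentum equation onto an arbitrary unit vector $\bw$ and tracking $u_\pm(t)=\max/\min_{\al,\bx}\langle\bu_\al(t,\bx),\bw\rangle$ first yields the crude a~priori bound $\delta V(\bu(t))\leq\delVz$, hence $D(t)\lesssim D_0+\delVz\,t$. Feeding this linear growth and the Pareto condition into the coupled ODE
\[
\ddt\delta V(\bu(t))\leq -\zetaM\lambda_2(\DelMP(D(t)))\,\delta V(\bu(t))+2C_\phi\sqrt{\delE(t)}
\]
then gives fractional-exponential decay of $\delta V(\bu(t))$; integrating this decay produces $D(t)\leq D_\infty<\infty$, and only then does the uniform lower bound $\lambda_2(\DelMP(D(t)))\geq\lambda_2(\DelMP_\infty)$ upgrade the energy estimate to full exponential decay. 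The route is thus a three-pass bootstrap (linear diameter growth $\Rightarrow$ fractional $\delE$-decay $\Rightarrow$ fractional $\delta V$-decay $\Rightarrow$ finite $D_\infty$ $\Rightarrow$ exponential decay), not a single Lyapunov functional, and the $L^\infty$ maximum-principle step is exactly the ingredient your proposal is missing.
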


\noindent
The proof of theorem \ref{THM_1}, carried out in section \ref{sec:hydro_flocking} below, is achieved  by showing the decay of the fluctuations
\[
\left(\sum_{\al,\beta\in\mathcal{I}}\iint|\bu_\al(t,\bx)-\bu_\beta(t,\by)|^p\rho_\al(t,\bx)\rho_\beta(t,\by)\dbx\dby\right)^{1/p} \stackrel{t\rightarrow \infty}{\longrightarrow} 0.
\]
In particular, the  decay of the \emph{energy fluctuations}, corresponding to $p=2$,
\[
\delta E(t)=\sum_{\al,\bet\in {\mathcal I}}\iint |\bu_\al(t,\bx)-\bu_\bet(t,\by)|^2\rho_\al(t,\bx)\rho_\bet(t,\by)\dbx\dby,
\]
 and the decay of \emph{uniform fluctuations}, corresponding to $p=\infty$, 
\[
\delV(\bu(t))=\max_{\al,\bet\in{\mathcal I}}\sup_{\bx,\by\in{\mathcal S}(t)}
|\bu_\al(t,\bx)-\bu_\bet(t,\by)|, \qquad {\mathcal S}(t)= \cup_\al\text{supp}\{\rho_\al(t,\cdot)\},
\]
 imply that the whole crowd of different species remains  within a uniformly bounded finite diameter,  
 $D_\infty \leq D_0+C_\theta \cdot\delV_0 < \infty$ (with $C_\theta \lesssim 
 (1-\theta)^{\frac{\theta}{1-\theta}}$; consult \eqref{eq:Dbd} below). It follows that the fluctuations, $\delE(t), \delV(t)$, decay at exponential rate and that  all species `aggregate' around a limiting velocity $\mathbf{u}_\infty$.
Since the total mass    $M(t)=\sum_{\al} \int \rho_\al(t,\bx) \dbx$ and the total momentum $\bm(t)=\sum_{\al} \int \rho_\al\bu_\al(t,\bx) \dbx$
 are conserved in time, $M(t)=\Min$ and ${\mathbf m}(t)={\mathbf m}_0$, it follows that the  different species  flock together with the only possible limiting velocity $\displaystyle \bu_\al(t,\cdot) \stackrel{t\rightarrow \infty}{\longrightarrow} \buinf :=\frac{\mathbf{m_0}}{\Min}$.

\begin{rmk}[{\bf Why weighted Laplacian?}]
In case of equi-weighted species $M_\al\equiv 1$, the weighted Laplacian \eqref{eq:PhiLap} amounts to the usual  graph Laplacian $\Delta\Phi(r)$.
 Its Fiedler number, $\lambda_2(\DelPh(r))$,  quantifies the connectivity of the graph associated with the adjacency matrix $\Phi(r)$, \cite{Fiedler75}, \cite[proposition 6.1]{Mohar91}. Here, we advocate the use of the weighted graph Laplacian, 
$\DelMP(r)$, whose properties  are outlined in section \ref{sec:wFiedler} below; in particular, \emph{if} the number of species is finite, $|{\mathcal I}|<\infty$, then there holds, consult  \eqref{eq:compare},
\begin{equation}\label{eq:xcompare}
\frac{\Min}{\kappa^{2}|{\mathcal I}|} \leq \frac{\lambda_2(\DelMP)}{\lambda_2(\DelPh)}  \leq \frac{\Min\kappa^{2}}{|{\mathcal I}|}, \qquad \kappa=\frac{\max M_\alpha}{\min M_\alpha}, \quad \Min:=\sum_{\al\in {\mathcal I}} M_\al,
\end{equation}
and hence    $\Phi(r)$ is connected as long as  $\lambda_2\big(\DelMP(r)\big) \approx_\scrM \lambda_2\big(\DelPh(r)\big)>0$.  The advantage of using the weighted $\lambda_2(\DelMP(r))$, however,  is that it provides the right scaling  for the decay rate of multi-species dynamics \eqref{eq:revisit}, \textup{(}i\textup{)} independent of the condition number, $\kappa$, and \textup{(}ii\textup{)} independent of the number of different species, $|{\mathcal I}|$. On the other hand, if we accept $\kappa,|{\mathcal I}|$-dependence, then \eqref{eq:xcompare} implies that for \eqref{eq:Pareto} to hold it suffices to verify the Pareto `fat-tail' connectivity condition $\lambda_2(\DelPh(r)) \, {\gtrsim}_{{}_{\scrM,\kappa,|{\mathcal I}|}} (1+r)^{-\theta}$ with $\theta<1$.
\end{rmk}

\begin{rmk}[{\bf Game of alignment}]\label{rmk:game}
The graph Laplacian of the communication array $\Phi(r)$ is independent of the self-interacting kernels $\{\phi_{\al\al}\, | \,\al\in{\mathcal I}\}$. Thus, according to theorem \ref{THM_1}, flocking can be viewed as the outcome of a `game' in which agents from one species interact with different species  but are \emph{independent} of the interaction with their own kind. Alignment dynamics based on a game within a  single species was recently studied in \cite{griffin2019consensus}; a two-species ensemble dynamics in \cite{ha2017emergent}. A main feature in our multi-species alignment game (of two or more species) is that one can \emph{ignore interactions with its own kind}, i.e., set $\phi_{\al\al}=0$ in \eqref{Hydrodynamic_Flocking_eqs}  and yet the information will eventually be reflected through interactions with the other connected species leading to overall flocking. 
\end{rmk}

\begin{exm}
Consider the case of two species with $2\times 2$ symmetric communication array,
\[
\Phi= \left[
   \begin{array}{cc}
    0 & \phi_{12}(r)  \\ \\
    \phi_{21}(r) & 0  \\
   \end{array}
 \right], \qquad   \phi_{12}(r)=\phi_{21}(r) \gtrsim \frac{1}{(1+r)^\theta}, \ \  \theta<1.
 \]
In this case, agents in each of the two groups interact with the other group but \underline{not} with their own kind ($\phi_{11}=\phi_{22}\equiv 0$). The large-time behavior of such `game' leads to flocking.\newline
 Similarity, consider the case of four species with $4\times 4$ symmetric  communication array
\[
  \Phi=  \left[\begin{array}{cccc}
    0  & \phi_{12} & 0 & \phi_{14} \\
    \phi_{21} & 0 & \phi_{23} & 0  \\
     0 & \phi_{32} & 0 & \phi_{34}\\
    \phi_{41} & 0 & \phi_{43} & 0\\
   \end{array} \right], \quad \phi_{\al\bet}(r) =\phi_{\bet\al}(r)\gtrsim{(1+r)^{-\mu\cdot\min\{\al,\bet\}}}, \ \ \mu<1/3.
\]
Again, species do not interact with their own kind, but the connectivity of  inter-group interactions is strong enough to induce flocking.
\end{exm}

We close this section by noting  that the flocking of multi-species hydrodynamics \eqref{Hydrodynamic_Flocking_eqs} infers similar behavior of the underlying discrete multi-species Cucker-Smale dynamics
\[
\left\{
\begin{split}
& \ \ \dot{\bx}^i_\al=\bv_\al^i,\\
& \ \ \dot{\bv}_\al^i=\sum_{\beta\in \mathcal{I}}\frac{1}{N_\beta}\sum_{j=1}^{N_\beta} \phi_{\alpha\beta}(\tc|\bx^j_\beta-\bx^i_\al|)(\bv^j_\beta-\bv^i_\al),
\end{split}
\right.
 \qquad i\in 1,2,..., N_\al, \quad \al\in\mathcal I.
\]
The key feature is, again, weighted connectivity.
Thus, if the communication array $\Phi(r)=\{\phi_{\al\bet}(r)\}_{\al,\bet\in{\mathcal I}}$ satisfies the corresponding Pareto-type `fat-tail' connectivity condition\newline
$\lambda_2(\DelNP(r)) \gtrsim (1+r)^{-\theta}$
(weighted by the sizes of different species $\scrN:=\{N_\al\}_{\al\in{\mathcal I}}$),
 then the diameter of the different species remains bounded
depending on $1-\theta, \sum_\al N_\al$ and  $\delta\bv_0$,
\[
\max_{\al,\bet} \max_{i,j} |\bx_\al^i-\bx_\bet^j| \leq D_\infty< \infty,
\qquad \delta\bv_0:=\max_{i,j}\max_{\al,\bet}|\bv_\al^i(0)-\bv^j_\bet(0)|,
\]
 and the different species  flock towards a limiting velocity  $\bvinf$, 
\begin{equation}\label{eq:revisit}
\sum_{\al\in\mathcal{I}}|\bv^i_\alpha(t)-\bvinf|^2 \leq \sum_{\al\in\mathcal{I}} |\bv^i_{\alpha}(0)-\bvinf|^2 \cdot e^{\displaystyle -2\rate t}, 
\end{equation}
with exponential rate, $\rate$, dictated by the spatial scale $D_\infty$. 
The relation between connectivity and flocking was motivated by our earlier study  of   flocking for  discrete dynamics of one species, $\{(\bx^i(t), \bv^i(t))\}_{i=1}^N$, governed by  $\dot{\bv}^i=\frac{1}{N}\sum_{j=1}^N \phi(|\bx^j-\bx^i|)(\bv^j-\bv^i)$
and subject to \emph{short-range interactions}, \cite[Theorem 2.11]{MotschTadmor14}. It was shown that if connectivity persists in time  so that
\[
\int^\infty\lambda_2(\Delta {\Phi}(t))\dt=\infty, \qquad \Phi_{ij}(t)=\{\phi(|\bx_i(t)-\bx_j(t)|)\},
\]
then flocking follows, $\bv^i(t) \stackrel{t\rightarrow \infty}{\longrightarrow} \bv_\infty$.
\subsection{One- and two-dimensional smoothness --- sub-critical data}
The conditional statement that `smooth solutions must flock'  raises the question whether the  multi-species dynamics \eqref{Hydrodynamic_Flocking_eqs} admits global  smooth solutions. 

The case of one species was studied in one- and two-spatial dimensions. The one-dimensional well-posedness theory  \cite{CCTT16} provided precise characterization of global smooth solutions with sub-critical initial data, $u'_0+\phi*\rho_0\geq 0$. Global smoothness in two dimensions was proved for sub-critical initial data outlined in \cite{TanTadmor,HeTadmor17}. Here we develop the corresponding well-posedness of multi-species dynamics \eqref{Hydrodynamic_Flocking_eqs} in one- and two-spatial dimensions.

\medskip\noindent
 The one-dimensional  result is stated for  \emph{non-vacuous} initial data
 in the 1D torus.
\begin{thm}[{\bf Existence of smooth solutions --- one-dimensional dynamics}]\label{Thm_2}
Consider the multi-species dynamics \eqref{Hydrodynamic_Flocking_eqs}  subject to non-vacuous initial data $\{(\rho_{\al0}>0, {u}_{\al0})\}\in (L^\infty\cap L_+^1(\rt))\times W^{1,\infty}(\rt)$. If the initial condition satisfies the sub-critical threshold condition
\begin{equation}
 {u}'_{\al 0}(x)+\sum_{\beta\in\mathcal{I}} \phi_{\al\beta}*\rho_{\beta 0}(x)\geq 0,\quad \forall x\in\rt,\al\in \mathcal{I},
\end{equation}
then the multi-species dynamics \eqref{Hydrodynamic_Flocking_eqs} admits  global non-vacuous smooth solution, $(\rho_\al,\bu_\al)\in C(\rr_+; L^\infty\cap L^1(\rt))\times C(\rr_+;\dot W^ {1,\infty}(\rt))$.
\end{thm}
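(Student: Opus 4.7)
The plan is to reduce the question of global-in-time regularity to the propagation of a one-sided `sub-critical' threshold, mimicking the single-species argument of \cite{CCTT16}. For each species $\al\in \mathcal{I}$, I introduce the sub-critical quantity
\[
e_\al(t,x) := \pa_x u_\al(t,x) + \sum_{\bet\in\mathcal{I}}(\phi_{\al\bet}*\rho_\bet)(t,x), \qquad A_\al := \sum_{\bet}\phi_{\al\bet}*\rho_\bet,
\]
together with the $\al$-species material derivative $D_t^\al:=\pa_t+u_\al\pa_x$. The entire proof rests on proving the Riccati-type identity
\[
D_t^\al e_\al = -u'_\al\, e_\al = -(e_\al-A_\al)\,e_\al, \qquad D_t^\al \rho_\al = -\rho_\al\, u'_\al = -\rho_\al\,(e_\al-A_\al),
\]
which couples the evolution of $(e_\al,\rho_\al)$ along the flow of $u_\al$ in exactly the same scalar dissipative form as in the single-species case, so that the sign condition $e_{\al 0}\geq 0$ is preserved.

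To derive this identity I would differentiate the $\al$-momentum equation in $x$, writing the right-hand side as $B_\al - u_\al A_\al$ with $B_\al:=\sum_\bet \phi_{\al\bet}*(u_\bet\rho_\bet)$, obtaining
\[
D_t^\al u'_\al = -(u'_\al)^2 + \pa_x B_\al - u'_\al A_\al - u_\al\pa_x A_\al.
\]
The genuinely multi-species step is computing $D_t^\al A_\al$: using the continuity equations $\pa_t\rho_\bet = -\pa_x(u_\bet\rho_\bet)$ for \emph{each} species and an integration by parts against $\phi_{\al\bet}$ gives $\pa_t A_\al = -\pa_x B_\al$, hence $D_t^\al A_\al = -\pa_x B_\al + u_\al\pa_x A_\al$. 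Adding the two lines produces the clean cancellation $D_t^\al e_\al = -u'_\al e_\al$; every cross-species term disappears from the scalar balance, even though the convolutions inside $A_\al$ are transported by the distinct flows of the other species.

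With the Riccati identity in hand, the a priori bounds are immediate. Since $\phi_{\al\bet}$ is bounded and the masses $M_\bet$ are conserved, $\|A_\al(t,\cdot)\|_{L^\infty(\rt)}\leq C_0:=\sum_\bet \|\phi_{\al\bet}\|_{L^\infty}M_\bet$ uniformly in time. If $e_{\al 0}\geq 0$ (which is precisely the sub-critical threshold hypothesis), the linear ODE $\tfrac{d}{dt}e_\al(t,X_\al(t,x))=-u'_\al\, e_\al$ preserves positivity, so $u'_\al = e_\al - A_\al\geq -C_0$ gives a uniform one-sided Lipschitz bound on each $u_\al$. The companion inequality $D_t^\al e_\al = A_\al e_\al - e_\al^2 \leq C_0 e_\al$ prevents finite-time blow-up from above, yielding $\|e_\al(t,\cdot)\|_{L^\infty}\leq \|e_{\al 0}\|_{L^\infty}\, e^{C_0 t}$, whence $\|u'_\al(t,\cdot)\|_{L^\infty}$ grows at most exponentially. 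Finally, $D_t^\al\log\rho_\al = -u'_\al$ propagates both upper and lower bounds on $\rho_\al$ along the $\al$-flow, preserving non-vacuity. A standard local well-posedness result in $L^\infty\cap L^1_+(\rt)\times W^{1,\infty}(\rt)$ combined with these a priori bounds then extends the solution globally via a continuation argument.

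The main obstacle to expect is exactly the multi-species structure of the convolution $A_\al=\sum_\bet\phi_{\al\bet}*\rho_\bet$: each summand is transported by its \emph{own} characteristic flow $u_\bet$, so there is no a priori reason that the material derivative $D_t^\al A_\al$ should close against the single $\al$-flow along which $e_\al$ is computed. The cancellation $\pa_t A_\al = -\pa_x B_\al$ — which is a direct consequence of mass conservation for every species $\bet$ — is what makes the scheme work, and it is the key observation that allows the single-species threshold technology of \cite{CCTT16} to survive the cross-species coupling verbatim.
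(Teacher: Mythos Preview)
Your proposal is correct and follows essentially the same approach as the paper: you introduce the same quantity $e_\al=\pa_x u_\al+\sum_\bet\phi_{\al\bet}*\rho_\bet$, derive the same transport identity $D_t^\al e_\al=-u'_\al e_\al$ via the cancellation $\pa_t A_\al=-\pa_x B_\al$ coming from the mass equations, and conclude invariance of the sub-critical region together with two-sided bounds on $u'_\al$ and on $\rho_\al$. The only cosmetic difference is that the paper packages the positivity argument through the transported ratio $q_\al:=e_\al/\rho_\al$ (which satisfies $D_t^\al q_\al=0$), whereas you argue directly from the linear ODE for $e_\al$ along characteristics; both routes yield the same conclusion.
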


Turning to the two-dimensional case, we let $(\rho_\al,\bu_\al)$ be a solution of the 2D multi-species dynamics \eqref{Hydrodynamic_Flocking_eqs}. Global smoothness for sub-critical initial data is quantified in terms of the \emph{spectral gap} associated with the (symmetric part of the) $2\times 2$ velocity gradient matrix e.g., \cite{HeTadmor17}
\[
S_\al(t,\bx):=\frac{1}{2}\Big(\na \bu_\al(t,\bx)+(\na \bu_\al(t,\bx))^\top\Big), \quad (\na \bu_\al)_{ij}=\pa_j \bu^i_\al(t,\cdot),\enskip {i,j\in\{1,2\}}.
\]

\begin{thm}[{\bf Existence of smooth solutions --- two-dimensional dynamics}]\label{Thm_3}
Consider the two-dimensional multi-species dynamics \eqref{Hydrodynamic_Flocking_eqs} subject to compactly supported initial conditions $\{(\rho_{\al0}, \bu_{\al0})\}_{\al\in \mathcal{I}} \in (L^\infty\cap L_+^1(\rr^2))\times W^{1,\infty}(\rr^2)$. 
Assume a connected  communication array $\Phi(r)=\{\phi_{\al\beta}(r)\}_{\al,\bet\in{\mathcal I}}$ satisfying the `fat-tail' decay \eqref{eq:Pareto}, 
$\lambda_2(\DelMP(r)) \gtrsim (1+r)^{-\theta},\ \theta <1$.
There exists a constant $C_1=C_1(|\phi'_{\al \beta}|_{\infty},{\Min}, \gamma)$
(specified in \eqref{eq:setC0} below), such that if the initial fluctuations are not too large, $\delV_0\leq C_1$, and the following critical threshold conditions hold
\begin{subequations}
\begin{align}
\diver{\bu_{\al 0}}&(\bx)+\sum_{\beta\in\mathcal{I}}\phi_{\al\beta}*\rho_{\al0}(\bx) >0,\quad \forall\bx \in\rr^2,\label{eq:eCT}\\
\max_{\bx,\al} & |  \lambda_2(S_\al(0,\bx))-\lambda_1(S_\al(0,\bx))| <\frac{1}{2} C_1,  \quad  \label{eq:etaCT}
\end{align}
\end{subequations}
then the multi-species dynamics \eqref{Hydrodynamic_Flocking_eqs} admits a global smooth solution $(\rho_\al,\bu_\al)\in C(\rr_+; L^\infty\cap L^1(\rr^2))\times C(\rr_+;\dot W^ {1,\infty}(\rr^2))$ with large time hydrodynamic flocking behavior 
$\bu_\al(t,\bx) \rightarrow \buinf$. 
\end{thm}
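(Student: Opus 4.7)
The plan is to upgrade the conditional flocking statement of Theorem \ref{THM_1} to an unconditional global regularity result by a critical-threshold/invariant region argument carried out \emph{along the $\alpha$-characteristics} $\dot{\bx}=\bu_\al(t,\bx)$, in the spirit of the single-species 2D analysis of \cite{HeTadmor17}, suitably tracking the cross-species coupling that is new here. First I would establish local-in-time existence by a standard mollification scheme, which yields a maximal time of existence $T^*>0$ and reduces the proof to an a priori bound on $\|\na\bu_\al\|_\infty$ and $\|\rho_\al\|_\infty$ for $t<T^*$. Since the solution is smooth on $[0,T^*)$, Theorem \ref{THM_1} applies on this interval: the union of supports stays inside a ball of diameter $D_\infty<\infty$, and the velocity fluctuation $\delV(\bu(t))$ decays exponentially with rate $\rate=\zetaM\lambda_2(\Delta_\scrM\Phi_\infty)$. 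This conditional flocking is the tool that will close the ODE system below.

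Next, differentiate the velocity equation to obtain, along the $\al$-characteristics, the matrix Riccati-type identity
\begin{equation*}
(\na\bu_\al)' + (\na\bu_\al)^2 + \psi_\al\,\na\bu_\al \ =\ \sum_{\bet\in\mathcal I}\int \na_\bx\phi_{\al\bet}(|\bx-\by|)\otimes\bigl(\bu_\bet(\by)-\bu_\al(\bx)\bigr)\rho_\bet(\by)\dby,\qquad \psi_\al:=\sum_{\bet}\phi_{\al\bet}*\rho_\bet.
\end{equation*}
Decompose $\na\bu_\al=\tfrac12 d_\al I + S_\al^{\mathrm{TF}}+\Omega_\al$ into scalar, traceless-symmetric, and antisymmetric parts, and introduce the two scalar observables that drive the 2D threshold analysis:
\begin{equation*}
e_\al:=d_\al+\psi_\al\qquad\text{and}\qquad \eta_\al:=\lambda_2(S_\al)-\lambda_1(S_\al)=2|S_\al^{\mathrm{TF}}|.
\end{equation*}
Using the continuity equation for $\psi_\al'$ and taking the trace/deviatoric parts of the matrix equation above, I would derive a coupled system of the schematic form
\begin{equation*}
e_\al' = -\,d_\al\,e_\al + \tfrac14\eta_\al^2 + R_\al,\qquad \eta_\al' = -(d_\al+\psi_\al)\,\eta_\al + \widetilde R_\al,
\end{equation*}
where the \emph{remainders} $R_\al,\widetilde R_\al$ gather (i) the cross-species kernel terms $\sum_{\bet}\int\na\phi_{\al\bet}\otimes(\bu_\bet-\bu_\al)\rho_\bet\dby$, bounded by $|\phi'_{\al\bet}|_\infty \delV(\bu(t))\cdot\Min$, and (ii) the transport terms $\sum_\bet\int\phi_{\al\bet}(\cdot)(\na\bu_\bet(\by)-\na\bu_\al(\bx))\rho_\bet\dby$ from differentiating $\psi_\al$. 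By Theorem \ref{THM_1}, the first type decays like $|\phi'_{\al\bet}|_\infty\delV_0\,e^{-\rate t}$, and this is precisely where the smallness hypothesis $\delV_0\le C_1$ enters.

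\paragraph{Invariant region and closure.}
The heart of the argument is then to show that the region
\begin{equation*}
\mathcal R:=\Bigl\{(e_\al,\eta_\al)\ :\ e_\al>0,\ \eta_\al<\tfrac12 C_1\Bigr\}
\end{equation*}
is invariant along \emph{every} $\al$-characteristic. This combines three ingredients: (a) the sub-critical hypotheses \eqref{eq:eCT}--\eqref{eq:etaCT} to place the initial data inside $\mathcal R$; (b) a barrier/comparison argument for the scalar ODE $e_\al'\ge -d_\al e_\al + \tfrac14\eta_\al^2+R_\al$ showing that $e_\al$ cannot reach $0$ as long as the remainder $R_\al$ is small and $\eta_\al<\tfrac12 C_1$; (c) a Grönwall estimate for $\eta_\al$ using that $d_\al+\psi_\al=e_\al>0$ yields damping $-\eta_\al\cdot e_\al$, so $\eta_\al$ stays below $\tfrac12 C_1$ once $\widetilde R_\al$ is time-integrable (thanks to the $e^{-\rate t}$ decay of the cross-species fluctuations). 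The precise value of $C_1=C_1(|\phi'_{\al\bet}|_\infty,\Min,\gamma)$ comes out of balancing these three constraints. Once $(e_\al,\eta_\al)\in\mathcal R$ is trapped uniformly in time, $d_\al=e_\al-\psi_\al$ stays bounded, hence $\na\bu_\al\in L^\infty$ and the continuity equation gives a uniform bound on $\rho_\al$; this rules out blow-up at $T^*$, so $T^*=\infty$. The flocking conclusion $\bu_\al\to\buinf$ is then inherited from Theorem \ref{THM_1}.

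\paragraph{Main obstacle.}
The delicate point is step (b)--(c): unlike the single-species case, the remainders $R_\al,\widetilde R_\al$ depend on $\na\bu_\bet$ of \emph{other} species through the self-gradient of the kernel convolution $\na\psi_\al$, so an honest bootstrap requires absorbing these into the damping structure \emph{simultaneously for all $\al\in\mathcal I$}. I would handle this by taking a supremum $\sup_\al\|\eta_\al\|_\infty$ and using the weighted Poincaré/connectivity bound on $\lambda_2(\Delta_\scrM\Phi_\infty)$ from Theorem \ref{THM_1} to guarantee that the $e^{-\rate t}$ decay is uniform in $\al$, so that the coupled system reduces to a closed scalar differential inequality for the envelope. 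The smallness $\delV_0\le C_1$ is then precisely what makes this envelope inequality self-sustaining.
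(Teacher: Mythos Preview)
Your overall architecture (local existence $\to$ conditional flocking from Theorem~\ref{THM_1} $\to$ threshold ODE analysis along characteristics $\to$ continuation) is the right one, and matches the paper. But the ODE system you propose to close has two genuine gaps.

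\textbf{The vorticity is missing.} You decompose $\na\bu_\al$ into trace, traceless-symmetric, and antisymmetric parts, but then track only $e_\al$ and $\eta_\al$. The antisymmetric part $\omega_\al=\tfrac12(\partial_1 u_\al^2-\partial_2 u_\al^1)$ enters the $e_\al$ equation: the correct identity (via $\trace(\na\bu_\al)^2=\tfrac12(d_\al^2+\eta_\al^2)-2\omega_\al^2$) is
\[
e_\al'=\tfrac12\Big(\psi_\al^2+4\omega_\al^2-\eta_\al^2-e_\al^2\Big),
\]
not $e_\al'=-d_\al e_\al+\tfrac14\eta_\al^2+R_\al$. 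In particular the $\eta_\al^2$ term carries a \emph{negative} sign: it is the destabilizing term, and the whole point of the hypothesis $\eta_\al(0)<\tfrac12 C_1$ is to ensure $\psi_\al^2-\eta_\al^2>0$ so that $e_\al'\ge\tfrac12(c_-^2-e_\al^2)$ forces $e_\al$ to stay positive. With your sign the mechanism is backwards.

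More seriously, your invariant region $\mathcal R=\{e_\al>0,\ \eta_\al<\tfrac12 C_1\}$ contains no upper bound on $e_\al$, so the sentence ``once $(e_\al,\eta_\al)\in\mathcal R$ is trapped, $d_\al=e_\al-\psi_\al$ stays bounded'' is unjustified. The paper closes this with a separate step: once $e_\al\ge0$, the vorticity equation $\omega_\al'+e_\al\omega_\al=\tfrac12\trace(JR_\al)$ is damped, giving $|\omega_\al|\le(\omega_\al)_+<\infty$; feeding this back into $e_\al'\le\tfrac12\big((\sum_\beta|\phi_{\al\beta}|_\infty M_\beta)^2+4(\omega_\al)_+^2-e_\al^2\big)$ then yields the upper bound on $e_\al$. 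Without tracking $\omega_\al$ you cannot complete the $L^\infty$ bound on $\na\bu_\al$.

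\textbf{The ``main obstacle'' you identify is not there.} The residual matrix $(R_\al)_{ij}=\sum_\beta\int\partial_j\phi_{\al\beta}(|\bx-\by|)(u_\beta^i(\by)-u_\al^i(\bx))\rho_\beta(\by)\,\dby$ involves only $\bu_\beta-\bu_\al$, never $\na\bu_\beta$; and $\psi_\al'=-\trace R_\al$ by the continuity equation, again with no gradient of other species. So each $\al$-system $(e_\al,\eta_\al,\omega_\al)$ is closed once the scalar $\delV(t)$ is controlled, and there is no need for the simultaneous-envelope bootstrap you sketch. The coupling across species is entirely absorbed into the exponentially decaying forcing $|R_\al|\lesssim|\phi'|_\infty M\,\delV_0 e^{-\rate t}$.
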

\subsection{Multi-species aggregation model}
We turn our attention to the multi-species  \emph{aggregation} dynamics. The aggregation dynamics of a  single-species  arises in different  contexts of modeling opinion dynamics, the rendezvous problem, etc; see e.g.,  \cite{BertozziCarrilloLaurent09,HuangBertozzi10,FetecauHuangKolokolnikov11,
CarrilloDiFrancescoFigalliLaurentSlepcev11,Grindrod88,Poupaud02} and the reference therein,
\[
\left\{\begin{split}
\partial_t \rho -\na\cdot \big( \big((\bx\phi )*\rho\big)\rho\big)&=0\\
 \rho(t=0,\bx)&=\rho_0(\bx),
 \end{split}\right. \qquad \forall\bx\in\rr^d.
\]
Global smooth solutions  tend to a Dirac mass which concentrates at the invariant center of mass. This large time \emph{concentration} reflects the emergence of consensus (in opinion dynamics) and rendezvous problem (in distributed sensor-based dynamics) etc.
There is also an increasing interest in two species-aggregation models, \cite{Grindrod88} and the recent works  \cite{FrancescoFagioli2013,KazianouLiaoVauchelet17}, and \cite{EversFetecauKolokolnikov17}. In particular,  \cite{FrancescoFagioli2013,KazianouLiaoVauchelet17} study  1D measure-valued solutions of the 2-species dynamics after blow-up  in the special case of $\phi_{\al\beta}\equiv \phi$, and  \cite{EversFetecauKolokolnikov17} categorize the possible steady states of the two-species system.
Here we  extend the discussion to  the multi-species setting
\begin{equation}\label{EQ:AggregationEq_multi-groups}
\left\{
\begin{split}
\qquad\pa_t \rho_\al- \sum_{\beta\in\mathcal{I}} \na\cdot ((\bx\phi_{\al\beta} )*\rho_\beta) \rho_\al)&=0, \\
\rho_\al(t=0,\bx)&=\rho_{\al 0}(\bx), 
\end{split} \right. \qquad \forall\bx\in \rr^d, \al\in \mathcal{I}.
\end{equation}
The different species  are identified by their densities --- $\rho_\al$ denotes the agent density in the species $\al$, a macroscopic realization of the agent-based dynamics of a species with $N_\al$ agents, each has position, $\bx_\al^i$, and interacts with the other species
\[
\dot \bx_\al^i=-\sum_{\bet\in{\mathcal I}}\frac{1}{N_\bet}\sum_{j=1}^{N_\bet} \phi_{\al\bet}(|\bx_\al^i-\bx_\bet^j|)(\bx_\bet^j-\bx_\al^i).
\]

In this paper, we extend the results to the multi-species setting and give explicit sufficient condition to guarantee consensus under the assumption that the communication array  $\Phi=\{\phi_{\al\beta}\}$ form a  connected network. Our main theorem is summarized in the following.

\begin{thm}[{\bf First-order aggregation}]\label{Thm_4}
Let $\{\rho_\al(t,\cdot)\} \in W^1_+(\rr^d)$ be a strong solution of  the multi-species aggregation system \eqref{EQ:AggregationEq_multi-groups} subject to compactly supported initial data $(\rho_{\al0})_{\al\in\mathcal{I}}$ with a finite diameter
\[
D_0= \sup_{\bx,\by\in{\mathcal S}_0}|\bx-\by|,\qquad {\mathcal S}_0=\cup_\al \textrm{supp}\, \{\rho_{\al 0}\}
\]
 and governed by  radially symmetric decreasing kernels $\{\phi_{\al\bet}(r)\}$
 \eqref{eq:symm}. Let $\Phi_0$ denote the communication array scaled at the initial diameter, $\Phi_0=\{\phi_{\al\bet}(D_0)\}_{\al,\bet\in{\mathcal I}}$. There holds
 \[
 \delD(t) \leq \delD(0)\cdot e^{-\displaystyle 2\zetaM\lambda_2(\DelMP_0)t} , \quad \delD(t):=\sum_{\al,\bet\in{\mathcal I}}\iint |\bx-\by|^2\rho_\al(t,\bx)\rho_\bet(t,\by)\dbx\dby
 \]
In particular, if the communication array $\Phi_0$ is  connected, then the different species $\{\rho_\alpha\}_{\al\in {\mathcal I}}$ aggregate towards the limiting position  $\overline{\bx}_\infty$ 
\begin{equation}\label{eq:xrevisit}
\sum_\al\int |\bx-\overline{\bx}_\infty|^2\rho_\al(t,\bx)\dbx \lesssim \sum_\al\int |\bx-\overline{\bx}_\infty|^2\rho_{\al 0}(\bx)\dbx \cdot e^{\displaystyle - 2\rate t}, 
\end{equation}
at exponential rate, $\rate$, dictated by the initial spatial scale $D_0$,
\[
\rate=\zetaM\lambda_2(\DelMP_0), \qquad \Phi_0=\{\phi_{\al\bet}(D_0)\}, \quad \zetaM=1-\frac{\max_\al M_\al}{\sum_\al M_\al}>0.
\]
\end{thm}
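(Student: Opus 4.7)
The plan is to derive a dissipation inequality of the form $\frac{d}{dt}\delta D(t)\le -2\zetaM\lambda_2(\DelMP_0)\delta D(t)$ and integrate it. First I would differentiate $\delta D(t)$ in time, substitute the continuity equations of \eqref{EQ:AggregationEq_multi-groups}, and integrate by parts in the space variable using $\na_\bx|\bx-\by|^2 = 2(\bx-\by)$. Exploiting the symmetry $\phi_{\al\gamma}=\phi_{\gamma\al}$, I would symmetrize the resulting triple integral by swapping $(\al,\bx)\leftrightarrow(\gamma,\bz)$ through the identity
\[
(\bx-\by)\cdot(\bx-\bz)+(\bz-\by)\cdot(\bz-\bx)=|\bx-\bz|^2,
\]
and then integrate out the redundant $\by$-variable (which contributes the factor $\sum_\bet M_\bet=\Min$). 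This would reduce the time derivative to the clean dissipation identity
\[
\frac{d}{dt}\delta D(t) = -2\Min\sum_{\al,\gamma\in\mathcal I}\iint|\bx-\by|^2\phi_{\al\gamma}(|\bx-\by|)\rho_\al(t,\bx)\rho_\gamma(t,\by)\,\dbx\dby.
\]

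Next I would establish the support bound $D(t)\le D_0$ by a convex-hull argument: along the characteristic flow $\dot \bx=\sum_\gamma\int(\by-\bx)\phi_{\al\gamma}\rho_\gamma\,\dby$ every point is pulled by a positive combination of vectors pointing into the current support, so $\mathcal S(t)$ never leaves the convex hull of $\mathcal S_0$; monotonicity of each kernel then yields $\phi_{\al\gamma}(|\bx-\by|)\ge\phi_{\al\gamma}(D_0)$. The heart of the argument is then a weighted Poincar\'e estimate on the communication graph: I would invoke the weighted Fiedler inequality of section \ref{sec:wFiedler}, which says that for any collection $\{v_\al\}$ orthogonal to $\{\sqrt{M_\al}\}$,
\[
\tfrac{1}{2}\sum_{\al\ne\bet}\phi_{\al\bet}(D_0)M_\al M_\bet|v_\al-v_\bet|^2 \ge \lambda_2(\DelMP_0)\sum_\al M_\al|v_\al|^2.
\]
Decomposing $\iint|\bx-\by|^2\rho_\al\rho_\gamma\,\dbx\dby = M_\al M_\gamma|\overline\bx_\al-\overline\bx_\gamma|^2 + M_\al\sigma_\gamma^2 + M_\gamma\sigma_\al^2$ into a between-species piece (the center-of-mass distances $\overline\bx_\al$) and a within-species piece (the variances $\sigma_\al^2=\int|\bx-\overline\bx_\al|^2\rho_\al\,\dbx$), the Fiedler bound applied coordinate-wise to $v_\al=\overline\bx_\al-\overline\bx_\infty$ (which is orthogonal to $\sqrt{M_\al}$ since the first moment $\sum_\al\int\bx\rho_\al\,\dbx$ is conserved) handles the between-species term, while the within-species term is controlled by the diagonal entries of $\DelMP_0$. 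Reconciling the two contributions, with $\zetaM=1-\max_\al M_\al/\Min$ providing the spectral slack between the Laplacian diagonal and the Fiedler number, yields
\[
\sum_{\al,\gamma}\phi_{\al\gamma}(D_0)\iint|\bx-\by|^2\rho_\al\rho_\gamma\,\dbx\dby \ge \frac{\zetaM\lambda_2(\DelMP_0)}{\Min}\,\delta D(t).
\]

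Combining these three ingredients gives the desired differential inequality, and Gr\"onwall produces the stated exponential decay of $\delta D(t)$. The species-wise concentration \eqref{eq:xrevisit} follows at once from the elementary identity $\delta D(t) = 2\Min\int|\bx-\overline\bx_\infty|^2\rho(t,\bx)\,\dbx$ with $\rho=\sum_\al\rho_\al$ and $\overline\bx_\infty=\Min^{-1}\sum_\al\int \bx\rho_{\al 0}\,\dbx$ invariant in time: exponential decay of $\delta D$ is the same as exponential concentration of every species around the common center of mass $\overline\bx_\infty$. The main obstacle, and the step where the proof is truly nontrivial, is the weighted Poincar\'e inequality: it must \emph{simultaneously} control the between-species geometry (through the spectral gap $\lambda_2(\DelMP_0)$) and the within-species dispersion (through the Laplacian diagonal), and it is precisely the need to reconcile the two that forces the appearance of the mass-imbalance factor $\zetaM$ in the exponential rate.
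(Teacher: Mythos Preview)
Your proposal is correct and follows the same three-step architecture as the paper's proof in section~\ref{sec:agg}: a dissipation identity for $\delta D(t)$ obtained by symmetrizing a triple integral, a support bound $D(t)\le D_0$ allowing the replacement $\phi_{\al\gamma}(|\bx-\by|)\ge\phi_{\al\gamma}(D_0)$, and then the weighted Poincar\'e inequality of section~\ref{sec:wFiedler} to close the Gr\"onwall loop. Your inline derivation of the Poincar\'e step via the between/within-species decomposition is exactly the proof of Lemma~\ref{lem:Poinv}, which the paper proves separately and then invokes with $(\bu_\al(\bx),\bu_\bet(\by))=(\bx,\by)$.

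The one genuine methodological difference is in the support bound. You argue via characteristics: the velocity field $\dot\bx=\sum_\gamma\int(\bz-\bx)\phi_{\al\gamma}\rho_\gamma\,\dbz$ is a nonnegative combination of inward-pointing vectors, so the convex hull of $\mathcal S(t)$ cannot expand. The paper instead proves (Theorem~\ref{THM_6_1}) that every $p$-moment $W_p(\rho(t))=\sum_{\al,\bet}\iint|\bx-\by|^p\rho_\al\rho_\bet$ is non-increasing, using convexity of $|\cdot|^p$ and the same $(\al,\bx)\leftrightarrow(\gamma,\bz)$ swap, and then sends $p\to\infty$. Your route is shorter and more geometric; the paper's route yields the stronger monotonicity of all $W_p$ along the way, which is of independent interest but not needed for the theorem itself.
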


\begin{rmk}
The proof of theorem \ref{Thm_4}, carried out in section \ref{sec:agg} below, 
 implies that if the communication array $\{\phi_{\al\bet}(D_0)\}$ forms a connected array then all species `aggregate' around a limiting position $\overline{\bx}_\infty$.
Since the center of mass $\displaystyle \frac{1}{M(t)}\sum_\al \int \rho_\al(t,\bx)\bx\dbx$ is conserved in time, it follows that the  different species  aggregate around $\overline{\bx}_\infty=$ center of mass as the only possible limiting position.
As before, aggregation depends on path connectivity but are independent of the  self-interacting kernels, $\{\phi_{\al\al} \, | \, \al\in{\mathcal I}\}$ which are allowed to vanish.
\end{rmk}
\begin{rmk}[Existence of smooth solution] Assume that $\bx\phi_{\al\beta}\in W^{1,\infty}(\rr^d)$. Then, the multi-species dynamics which we rewrite as
\[
\pa_t \rho_\al +\sum_{\beta\in\mathcal{I}}((\bx\phi_{\al\beta} )*\rho_\beta)\cdot \na \rho_\al=-\sum_{\beta\in\mathcal{I}}\na\cdot ((\bx\phi_{\al\beta} )*\rho_\beta)\rho_\al
\]
implies the uniform bound
\[
\frac{d}{dt}|\rho_\al|_\infty\leq \sum_{\beta\in\mathcal{I}}|\na\cdot(\bx\phi_{\al\beta} )*\rho_\beta|_\infty|\rho_\al|_\infty \lesssim \sum_{\beta\in\mathcal{I}}|\na\cdot(\bx \phi_{\al\beta})|_\infty M_\beta |\rho_\al|_\infty.
\]
The uniform bound of the $\rho_\al$'s implies higher $H^s$ Sobolev bounds by standard energy estimates. Thus, for example we have the $H^1$-bound
\begin{align*}
\ddt\sum_{\al\in\mathcal{I}}|\na \rho_\al|_2^2=&\sum_{\al,\beta\in{\mathcal I}}\int |\na \rho_\al\na (\na\cdot(\phi\mathbf{x})*\rho_\beta \rho_\al+\phi\mathbf{x}*\rho_\beta\cdot\na \rho_\al)|\dbx\\
\leq &\sum_{\al,\beta\in{\mathcal I}}\left(|\na \rho_\al|_2| \na (\phi \mathbf{x})|_\infty|\na \rho_\beta|_2||\rho_\al|_\infty+3|\na \rho_\al|_2^2|\na(\phi\mathbf{x})|_\infty|\rho_\beta|_1\right) \\
\lesssim & \sum_{\al\in\mathcal{I}}|\na \rho_\al|_2^2.
\end{align*}
\end{rmk}

The paper is organized as follows: In section \ref{sec:hydrodynamics}, we  formally derive the macroscopic model \eqref{Hydrodynamic_Flocking_eqs} as the large-crowd dynamic description of the discrete agent-based model. In section \ref{sec:wFiedler} we prepare  the weighted Poincar\'e inequality associated with weighted graph Laplacian which will be used in the sequel. In section \ref{sec:hydro_flocking}, we prove the main results of flocking: decay of energy fluctuations in theorem \ref{THM_4_1} and decay of uniform fluctuations in \ref{THM_4_2}, which in turn lead to the proof of theorem \ref{THM_1}. In section \ref{sec:global}, we prove the existence of global smooth solutions --- the one- and two-dimensional setup in   theorem \ref{Thm_2} and respectively \ref{Thm_3}. Finally in section \ref{sec:agg}, we treat the multi-species aggregation of system, proving Theorem \ref{Thm_4}.

\section{Derivation of the mesoscopic and hydrodynamic models}\label{sec:hydrodynamics}

In this section, we formally derive the multi-species hydrodynamics \eqref{Hydrodynamic_Flocking_eqs} from the  underlying multi-species agent-based dynamics. 
To this end, we first derive a mesoscopic Vlasov type description  which in turn yields   the macroscopic description \eqref{Hydrodynamic_Flocking_eqs}. 

To formulate the mesoscopic equation, we first define the following empirical probability measure associated to the species $\al$, which represents the probability of finding an agent from species $\al$ at position $\bx$ with velocity $\bv$:
\begin{align}
f_\al(t,\bx,\bv)=\frac{1}{N_\al}\sum_{i=1}^{N_\al}\delta_{\bx^i_\al(t)}\otimes \delta_{\bv^i_\al(t)}.\label{empirical_measure_x }
\end{align}
Here $N_\al$ denotes the number of agents in the group $\al$. Evolution of each probability density $f_\al$ can be derived by testing $\pa_t f_\al$ against an arbitrary smooth function $\test$ through equation \eqref{Hydrodynamic_Flocking_eqs} 
\begin{align}
\iint\pa_t f_\al(t,\bx,\bv)\test(\bx,\bv)\dbx\dbv=&\frac{1}{N_\al}\sum_{i=1}^{N_\al}\pa_t\test(\bx^i_\al(t),\bv^i_\al(t))\nonumber\\
=&\frac{1}{N_\al}\sum_{i=1}^{N_\al}[\dot{\bx}^i_\al\cdot \na_\bx\test(\bx_\al^i(t),\bv^i_\al(t))+\dot{\bv}^i_\al\cdot\na_\bv\test(\bx^i_\al,\bv^i_\al)]\label{mid_step_1}\\
=&\frac{1}{N_\al}\sum_{i=1}^{N_\al}[{\bv}^i_\al\cdot \na_\bx\test(\bx_\al^i,\bv^i_\al)+{F}^i_\al\cdot\na_\bv\test(\bx^i_\al,\bv^i_\al)],\nonumber
\end{align}
with an alignment forcing $F_\al^i$  given by
\[
\begin{split}
F^i_\al=\sum_{\beta\in\mathcal{I}}\frac{1}{N_\beta}\sum_{j=1}^{N_\beta}\phi_{\al\beta}(|\bx^j_\beta-\bx^i_\al|)(\bv^j_\beta-\bv^i_\al)
= \sum_{\bet \in {\mathcal I}}L_{\al\beta}(f_\beta)(\bx^i_\al,\bv^i_\al),
\end{split}
\]
where $\displaystyle L_{\al\beta}(f_\beta)(\bx^i_\al,\bv^i_\al) :=\iint\phi_{\al\beta}(|\by-\bx^i_\al|)(\bw-\bv^i_\al)f_\beta(\by,\bw)\dby\dbw$.
Formal integration by parts in \eqref{mid_step_1} yields
\begin{align*}
\iint\pa_t &f_\al(t,\bx,\bv)\test(\bx,\bv)\dbx\dbv\\
\qquad =&\iint[\bv\cdot\na_\bx\test(\bx,\bv)+\sum_{\beta\in\mathcal{I}} L_{\al\beta}(f_\beta)(\bx,\bv)\cdot\na_\bv\test(\bx,\bv)]f_\al(\bx,\bv)\dbx\dbv\\
\qquad =&-\iint\left[\bv\cdot\na_\bx f_\al(\bx,\bv)+\na_\bv\cdot\left(\sum_{\beta\in\mathcal{I}}  L_{\al\beta}(f_\beta)f_\al\right)\right]\test \dbx\dbv.
\end{align*}
Since the test function $\test$ is arbitrary, the above integral equation yields the  \emph{mesoscopic scale equation}
\begin{eqnarray}\label{mesoscopic_model}
\pa_t f_\al(\bx,\bv)+\bv\cdot\na_\bx f_\al(\bx,\bv)+\na_\bv\cdot\left(\sum_{\beta\in\mathcal{I}} L_{\al\beta}(f_\beta)f_\al\right)=0.
\end{eqnarray}
The bi-linear expression inside the parenthesis on the left represents the inter-species alignment interactions.
This completes the derivation from the microscopic agent-based dynamics to the mesoscopic scale dynamics.

The  hydrodynamic description is formally achieved by calculating the time evolution of the `observable moments', e.g., the mass density and the momentum density:
\begin{equation}
\left\{
\begin{split}
\quad \rho_\al(t,\bx):=&\int_{\rr^d} f_\al(t,\bx,\bv)\dbv;\\
\rho_\al \bu_\al(t,\bx):=&\int_{\rr^d} \bv f_\al(t,\bx,\bv)\dbv.\label{Momentum_density}
\end{split}\right.
\end{equation}
By integrating the mesoscopic equation \eqref{mesoscopic_model} in the velocity variable $\bv$ and applying integration by parts, we derive the mass equation  for $\rho_\al$:
\begin{align}\label{Hydrodynamic_scale_continuity_equation}
(\rho_\al)_t+\na_\bx\cdot(\rho_\al \bu_\al)=0,\quad \forall\al \in\mathcal{I}.
\end{align}
The dynamics of the momentum $\rho_\al\bu_\al$ is obtained by integrating  \eqref{mesoscopic_model} against $\bv$,
\begin{align}\label{pre_hydrodynamic_velocity_equation}
0=\int\left[\pa_t(\bv f_\al)+\bv(\bv\cdot\na_\bx f_\al)+\bv\na_\bv\cdot\left(\sum_{\beta\in\mathcal{I}} L_{\al\beta}(f_\beta)f_\al\right)\right]\dbv=:I+II+III.
\end{align}
The first term is the time derivative of the momentum density, $\rho_\al \bu_\al$ in \eqref{Momentum_density},
\begin{align}
I=\pa_t(\rho_\al \bu_\al);\label{I}
\end{align}
the second term $II$ can be rewritten as
\begin{align}\label{II_and_defn_P}
\begin{split}
II & =\na_\bx\cdot(\rho_\al \bu_\al\otimes \bu_\al)+\na_\bx\cdot {\int (\bu_\al-\bv)\otimes (\bu_\al-\bv)f_\al(\bx,\bv_\al)\dbv}\\
 &=:\na_\bx\cdot(\rho_\al \bu_\al\otimes \bu_\al)+\na_\bx \cdot P_\al,
 \end{split}
\end{align}
where  $P_\al$ is interpreted as pressure tensor. 
For the third term $III$ in \eqref{pre_hydrodynamic_velocity_equation}, we use integration by parts to rewrite it as follows
\begin{align}
III=&\int \bv\na_\bv\cdot\left(\sum_{\beta\in\mathcal{I}} L_{\al\beta}(f_\beta)f_\al\right)\dbv
=-\sum_{\beta\in\mathcal{I}} \int L_{\al\beta}(f_\beta)f_\al \dbv\nonumber\\
=&-\sum_{\beta\in\mathcal{I}}\iiint \phi_{\al\beta}(|\by-\bx|)(\bw-\bv)f_\beta(\by,\bw)f_\al(\bx,\bv)\dby\dbw\dbv\nonumber\\
=&-\sum_{\beta\in\mathcal{I}}\iiint \phi_{\al\beta}(|\by-\bx|)(\bw f_\beta(\by,\bw))f_\al(\bx,\bv)\dbw\dby\dbv \nonumber\\
 & \qquad  +\sum_{\beta\in\mathcal{I}}\iiint \phi_{\al\beta}(|\by-\bx|)f_\beta(\by,\bw)(\bv f_\al(\bx,\bv))\dbv\dby\dbw\label{III}\\
=&-\sum_{\beta\in\mathcal{I}}\iint \phi_{\al\beta}(|\by-\bx|)(\rho_\beta \bu_\beta)(\by)f_\al(\bx,\bv)\dby\dbv \nonumber\\
 & \qquad  +\sum_{\beta\in\mathcal{I}}\iint \phi_{\al\beta}(|\by-\bx|)f_\beta(\by,\bw)(\rho_\al \bu_\al)(\bx)\dby\dbw\nonumber\\
=& -\sum_\bet\int\phi_{\alpha\beta}(\tc|\bx-\by|)(\bu_\beta(\by)-\bu_\alpha(\bx))\rho_\alpha(\bx)\rho_\beta(\by)\dby.\nonumber
\end{align}
Now combining \eqref{I}, \eqref{II_and_defn_P} and  \eqref{III}  we obtain the \emph{hydrodynamic momentum equation} 
\begin{align*}
\pa_t (\rho_\al \bu_\al)&+\na \cdot (\rho_\al \bu_\al\otimes \bu_\al)+\na_\bx\cdot P_\al=\sum_\bet\int\phi_{\alpha\beta}(\tc|\bx-\by|)(\bu_\beta(\by)-\bu_\alpha(\bx))\rho_\alpha(\bx)\rho_\beta(\by)\dby.
\end{align*}
Similar to the one-species (hydro-)dynamics, \cite{HaTadmor08, KangFigalli17}, we limit ourselves to the \emph{mono-kinetic ansatz} $f_\al(\bx,\bv)=\rho_\al(\bx)\delta_{\bu_\al(\bx)}(\bv)$ to impose the pressure closure $P_\al\equiv 0$, and end up with the multi-species   hydrodynamics  \eqref{Hydrodynamic_Flocking_eqs}.

\section{Weighted Poincar\'e inequalities}\label{sec:wFiedler}
Given an $N\times N$ symmetric array $\APhi=\{\aphi_{\al\bet}\}$ of non-negative entries, and  positive weights $\scrW:=\{\wrho_\al\}$, we are concerned with a \emph{weighted} Poincar\'e inequality  of the form
\begin{equation}\label{eq:weighted-Poi}
 \sum_{\alpha,\beta} \aphi_{\alpha\beta}|\ux_\alpha-\ux_\beta|^2\wrho_\alpha\wrho_\beta \geq \rate \sum_{\alpha,\beta}|\ux_\alpha-\ux_\beta|^2\wrho_\alpha\wrho_\beta, \qquad \rate>0.
\end{equation}
The standard Poincar\'e (or Courant-Fisher) inequality tells us that, in case of equal weights $\wrho_\alpha\equiv1$, \eqref{eq:weighted-Poi} holds with optimal $\rate$ given by  the Fielder number,
$\rate=\lambda_2(\Delta \APhi)/N$, where $\Delta \APhi$ is the  graph Laplacian, \cite{Fiedler75}, \cite[proposition 6.1]{Mohar91},
\begin{equation}\label{eq:unwPoi}
 \sum_{\alpha,\beta} \aphi_{\alpha\beta}|\ux_\alpha-\ux_\beta|^2 \geq \frac{\lambda_2(\Delta \APhi)}{N} \sum_{\alpha,\beta}|\ux_\alpha-\ux_\beta|^2. \qquad (\Delta \APhi)_{\alpha\beta}:= -(1-\delta_{\alpha\beta})\aphi_{\alpha\beta}+\delta_{\alpha\beta}\sum_{\gamma\neq \al} \aphi_{\alpha\gamma}.
\end{equation}
To treat the case of general weights, we let $\DelW \APhi$ denote the \emph{weighted} Laplacian 
\begin{equation}\label{eq:Lap}
\left(\DelW\APhi\right)_{\alpha\beta}=\left\{\begin{array}{ll} -\aphi_{\alpha\beta}\sqrt{\wrho_\alpha\wrho_\beta}, & \alpha\neq \beta,\\ \\
\displaystyle \sum_{\gamma\neq \alpha} \aphi_{\alpha\gamma}\wrho_\gamma, & \alpha=\beta.
\end{array}\right.
\end{equation}
Observe that $\DelW \APhi$ is symmetric yet \emph{not} row stochastic.
Its second eigenvalue dictates the following weighted Poincar\'{e} inequality for arbitrary $N$-vectors $\bux=\{\ux_\al\}$.
\begin{lem}[{\bf Weighted Poincar\'e inequality -- vectors}]\label{lem:Poin}
There holds 
\begin{equation}\label{eq:Poin}
\sum_{\alpha,\beta} \aphi_{\alpha\beta}|\ux_\alpha-\ux_\beta|^2\wrho_\alpha\wrho_\beta \geq \frac{\lambda_2(\DelW \APhi)}{\sum_\beta \wrho_\beta}\sum_{\alpha,\beta}|\ux_\alpha-\ux_\beta|^2\wrho_\alpha\wrho_\beta, \quad  \scrW:=\{\wrho_\al\}.
\end{equation}
\end{lem}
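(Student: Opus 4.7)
The plan is to reduce the inequality to the Rayleigh-quotient characterization of $\lambda_2(\DelW\APhi)$ by performing a change of variables $y_\alpha:=\sqrt{\wrho_\alpha}\,\ux_\alpha$, which turns the weighted bilinear forms on both sides into standard Euclidean quadratic forms associated with the symmetric matrix $\DelW\APhi$.

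First, I would rewrite the left-hand side as a quadratic form in $y$. Expanding $|\ux_\alpha-\ux_\beta|^2$ and using the symmetry $\aphi_{\alpha\beta}=\aphi_{\beta\alpha}$, one gets
\[
\sum_{\alpha,\beta}\aphi_{\alpha\beta}|\ux_\alpha-\ux_\beta|^2\wrho_\alpha\wrho_\beta
=2\sum_\alpha\Bigl(\sum_{\gamma\neq\alpha}\aphi_{\alpha\gamma}\wrho_\gamma\Bigr)|y_\alpha|^2
-2\sum_{\alpha\neq\beta}\aphi_{\alpha\beta}\sqrt{\wrho_\alpha\wrho_\beta}\,\langle y_\alpha,y_\beta\rangle,
\]
and by the very definition of $\DelW\APhi$ in \eqref{eq:Lap} this equals $2\langle y,\DelW\APhi\,y\rangle$ (componentwise for vector-valued $\ux_\alpha$).

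Next, I would analyse the right-hand side with the same substitution. A direct computation gives
\[
\sum_{\alpha,\beta}|\ux_\alpha-\ux_\beta|^2\wrho_\alpha\wrho_\beta
=2W\sum_\alpha |y_\alpha|^2-2\bigl|\langle y,\sqrt{\wrho}\rangle\bigr|^2
=2W\,\|Py\|^{2},
\]
where $W:=\sum_\beta\wrho_\beta$, $\sqrt{\wrho}:=(\sqrt{\wrho_\alpha})_\alpha$, and $P$ is the orthogonal projection onto the hyperplane $\{\sqrt{\wrho}\}^{\perp}$; note that $\|\sqrt{\wrho}\|^{2}=W$, so $Py=y-W^{-1}\langle y,\sqrt{\wrho}\rangle\sqrt{\wrho}$ has the required norm identity.

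The final step is to identify $\sqrt{\wrho}$ as the kernel direction of $\DelW\APhi$: a row-by-row check shows
\[
\bigl(\DelW\APhi\,\sqrt{\wrho}\bigr)_\alpha
=\sqrt{\wrho_\alpha}\sum_{\gamma\neq\alpha}\aphi_{\alpha\gamma}\wrho_\gamma-\sqrt{\wrho_\alpha}\sum_{\beta\neq\alpha}\aphi_{\alpha\beta}\wrho_\beta=0,
\]
so $0$ is an eigenvalue with eigenvector $\sqrt{\wrho}$, and the spectral theorem together with the Courant--Fischer min-max principle yields
\[
\langle y,\DelW\APhi\,y\rangle=\langle Py,\DelW\APhi\,Py\rangle\geq\lambda_2(\DelW\APhi)\,\|Py\|^{2}.
\]
Combining the three displays delivers \eqref{eq:Poin}. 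The only subtlety is the bookkeeping ensuring that the coefficient $\sqrt{\wrho_\alpha\wrho_\beta}$ appearing off-diagonal in $\DelW\APhi$ is precisely what is produced by the change of variables, and that the diagonal entry encodes exactly the unweighted-by-$\sqrt{\wrho_\alpha}$ row-sum $\sum_{\gamma\neq\alpha}\aphi_{\alpha\gamma}\wrho_\gamma$; this is the reason the Laplacian \eqref{eq:Lap} is defined in that asymmetric-looking way.
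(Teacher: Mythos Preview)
Your proof is correct and follows essentially the same approach as the paper: both arguments pass to the variables $y_\alpha=\sqrt{\wrho_\alpha}\,\ux_\alpha$, identify the left-hand side with the quadratic form $2\langle y,\DelW\APhi\,y\rangle$, verify that $\sqrt{\wrho}$ spans the kernel of $\DelW\APhi$, and then apply the Rayleigh-quotient bound on the orthogonal complement. The only cosmetic difference is that the paper writes the projection explicitly as $\sqrt{\bw}(\bux-\overline{\bux})$ with $\overline{\ux}=(\sum_\beta\wrho_\beta\ux_\beta)/(\sum_\beta\wrho_\beta)$ and then expands $|\sqrt{\bw}(\bux-\overline{\bux})|^2$ by hand, whereas you package the same computation via the orthogonal projector $P$ onto $\{\sqrt{\wrho}\}^\perp$; the two are identical since $Py=\sqrt{\bw}(\bux-\overline{\bux})$.
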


\noindent
\begin{rmk}[{\bf Scaling}]\label{rem:scaling} Lemma \ref{lem:Poin}  with $\wrho_\al \equiv 1$ recovers the regular Poincar\'e inequality \eqref{eq:unwPoi}. Observe that \eqref{eq:unwPoi} together with the obvious  $\min \wrho_\al^2\leq \wrho_\alpha\wrho_\beta \leq \max \wrho_\al^2$ yield a desired bound \eqref{eq:weighted-Poi} with $\rate=\lambda_2(\Delta \APhi)/(\kappa^{2}N)$,
\begin{equation}\label{eq:notsharp}
\sum_{\alpha,\beta} \aphi_{\alpha\beta}|\ux_\alpha-\ux_\beta|^2\wrho_\alpha\wrho_\beta \geq \lambda_2(\Delta \APhi)\frac{1}{\kappa^2 N} \sum_{\alpha,\beta}|\ux_\alpha-\ux_\beta|^2\wrho_\alpha\wrho_\beta, \quad \kappa:=\frac{\max\wrho_\alpha}{\min\wrho_\alpha}.
\end{equation}
The point to note here is that this bound in terms of $\lambda_2(\Delta \APhi)$ depends on $N$ and the condition number $\kappa$. In contrast, the weighted bound \eqref{eq:Poin} which involves $\lambda_2(\DelW(\APhi))$ has the right `scaling', depending on the (usually invariant) total mass of the weights but otherwise it is independent $N,\kappa$. In particular,  the size of  $A=\{a_{\al\bet}\}$   is allowed to grow unboundedly large with $N$ as long as  the total weight remains finite, $\sum_{\bet} \wrho_\bet<\infty$.
\end{rmk}

\begin{proof}[{Proof of Lemma \ref{lem:Poin}}] The sum on the left of \eqref{eq:Poin} can be expressed as a bi-linear form in terms of the weighted Laplacian $\DelW\APhi$ in \eqref{eq:Lap} (here and below $\bw$ is the vector of weights $\bw=(\wrho_1,\wrho_2,\ldots)^\top$ and we abbreviate 
$\sqrt{\bw}\bux=(\sqrt{\wrho_1}\ux_1,\sqrt{\wrho_2}\ux_2,\ldots)^\top$)
\begin{equation}\label{eq:weighted_sum}
\begin{split}
\big\langle \left(\DelW\APhi\right)\sqrt{\bw}\bux,\sqrt{\bw}\bux\big\rangle 
&:= -\sum_\al\sum_{\bet\neq \al} \aphi_{\alpha\beta}\sqrt{\wrho_\alpha\wrho_\beta}\sqrt{\wrho_\alpha} \sqrt{\wrho_\beta}\,\ux_\alpha \ux_\beta
+\sum_\alpha\sum_{\beta\neq \alpha}\aphi_{\alpha\beta}\wrho_\beta \wrho_\alpha|\ux_\alpha|^2  \\
& \equiv \frac{1}{2} \sum_\al\sum_{\bet\neq \al} \aphi_{\alpha\beta}|\ux_\beta-\ux_\alpha|^2\wrho_\alpha\wrho_\beta, 
\end{split}
\end{equation}
which shows  that the symmetric Laplacian $\DelW\APhi $ is positive semi-definite with eigenvalues $0= \lambda_1 \leq \lambda_2 \leq \ldots$. Here, $\lambda_1$ is the zero eigenvalue associated with the eigenvector $\sqrt{\bw}:=(\sqrt{\wrho_1}, \sqrt{\wrho_2},\ldots)^\top$,
\[
\Big(\left(\DelW \APhi\right) \sqrt{\bw}\Big)_\alpha
= -\sum_{\beta\neq \alpha}\aphi_{\alpha\beta}\sqrt{\wrho_\alpha\wrho_\beta}\sqrt{\wrho_\beta} + \sum_{\beta\neq \alpha} \aphi_{\alpha\beta}\wrho_\beta\sqrt{\wrho_\alpha} \equiv 0,
\]
and hence $\left(\DelW \APhi\right) (\sqrt{\bw}\,\overline{\bux})=0$ for any constant vector $\overline{\bux}=\overline{\ux}(1,1,\ldots, 1)^\top$. In particular, for  
$\displaystyle \overline{\ux}=\frac{\sum_\beta \wrho_\beta \ux_\beta}{\sum_\beta \wrho_\beta}$ the orthogonal complement of $\sqrt{\bw}\, \overline{\bux}$ is given by $\{\sqrt{\bw}(\bux-\overline{\bux})\}$,
\[
\big\langle \sqrt{\bw}(\bux-\overline{\bux}), \sqrt{\bw}\,\overline{\bux}\big\rangle =0, \qquad
\overline{\ux}:=\frac{\sum_\beta \wrho_\beta \ux_\beta}{\sum_\beta \wrho_\beta},
\]
hence
\begin{equation}\label{eq:ortho} 
\begin{split}
\big\langle \left(\DelW\APhi\right)\sqrt{\bw}\bux,\sqrt{\bw}\bux\big\rangle
 &= \left\langle \left(\DelW\APhi\right)\sqrt{\bw}(\bux-\overline{\bux}),\sqrt{\bw}(\bux-\overline{\bux})\right\rangle \\
 & \geq  \lambda_2(\DelW\APhi) \times|\sqrt{\bw}(\bux-\overline{\bux})|^2.
 \end{split}
\end{equation}
A straightforward computation yields
\begin{align*}
|\sqrt{\bw}(\bux-\overline{\bux})|^2  = & \sum_\alpha \wrho_\alpha |\ux_\alpha|^2-2\sum_\alpha \wrho_\alpha \ux_\alpha \overline{\ux} + \sum_\alpha\wrho_\alpha|\overline{\ux}|^2 
=   \sum_\alpha \wrho_\alpha |\ux_\alpha|^2 -\frac{|\sum_\beta \wrho_\beta \ux_\beta|^2}{\sum_\beta\wrho_\beta}\\
= & \frac{1}{\sum_\beta \wrho_\beta}\left(\sum_{\alpha,\beta}\wrho_\alpha\wrho_\beta|\ux_\alpha|^2
-\sum_\beta \wrho_\beta^2|\ux_\beta|^2 -\sum_\al\sum_{\bet\neq \al}\wrho_\alpha\wrho_\beta\, \ux_\alpha\ux_\beta\right)\\
= & \frac{1}{2\sum_\beta \wrho_\beta}\left(\sum_\al\sum_{\bet\neq\al}\wrho_\alpha\wrho_\beta|\ux_\alpha|^2 + \sum_\al\sum_{\bet\neq\al}\wrho_\alpha\wrho_\beta|\ux_\beta|^2
-2\sum_\al\sum_{\bet\neq\al}\wrho_\alpha\wrho_\beta\, \ux_\alpha \ux_\beta\right)  \\
 \equiv &  \frac{1}{2\sum_\beta \wrho_\beta} \sum_\al\sum_{\bet\neq\al} |\ux_\alpha-\ux_\beta|^2\wrho_\alpha\wrho_\beta,
\end{align*}
and \eqref{eq:Poin} follows from \eqref{eq:weighted_sum} and \eqref{eq:ortho}.
\end{proof}
\begin{rmk}[{\bf Optimality}]\label{rem:compare} The proof of Lemma \ref{lem:Poin} shows   the optimality of the weighted Laplacian \textup{(} --- choose $\sqrt{\bw}\bx$ as the second, Fiedler eigenvector of $\DelW \APhi$\textup{)},
leading to a Courant-Fisher-type  characterization 
\begin{equation}\label{eq:CF}
\frac{\lambda_2(\DelW \APhi)}{\sum_\beta \wrho_\beta} = \min_{|\delta\bux|_\wrho=1} \sum_\al\sum_{\bet\neq\al} \aphi_{\alpha\beta}|\ux_\alpha-\ux_\beta|^2\wrho_\alpha\wrho_\beta, \qquad |\delta\bux|^2_{\wrho}:= \sum_\al\sum_{\bet\neq\al}|\ux_\alpha-\ux_\beta|^2\wrho_\alpha\wrho_\beta.
\end{equation}
Hence, comparing this  with \eqref{eq:notsharp} one concludes
\begin{equation}\label{eq:compare}
 \frac{1}{\kappa^{2}N}\lambda_2(\Delta \APhi) \leq \lambda_2(\DelW\APhi)\frac{1}{\sum_\bet \wrho_\bet}  \leq \frac{\kappa^{2}}{N}\lambda_2(\Delta \APhi), \qquad \kappa=\frac{\max\wrho_\alpha}{\min\wrho_\alpha}.
\end{equation}
\end{rmk}

The array $\APhi$   forms a connected graph if it has a positive Fiedler number, $\lambda_2\big(\DelW \APhi\big)>0$. 
In particular, $\APhi$ being a connected graph, the degree of its nodes are positive, $\displaystyle \sum_{\bet\neq \gamma} \aphi_{\gamma\bet}\wrho_\bet >0$. To quantify this statement which will be used below, we  appeal to \eqref{eq:Poin}
\[
\sum_\al\sum_{\bet\neq\al}{\aphi}_{\al\bet}|{\ux}_\al-{\ux}_\bet|^2\wrho_\al\wrho_\bet \geq
\frac{\lambda_2(\DelW \APhi)}{\sum_\bet \wrho_\bet}\sum_\al\sum_{\bet\neq\al}|{\ux}_\al-{\ux}_\bet|^2\wrho_\al\wrho_\bet.
\]
Fix an index $\gamma$ and test  the last inequality with the  vector $\left\{\bux\ \Big| \ \ux_\al=\left\{\begin{array}{cc}0 & \al\neq \gamma,\\ 
\rate, & \al=\gamma.\end{array}\right.\right\}$, with normalization factor $\displaystyle \rate=\Big(2\sum_{\bet\neq \gamma}\wrho_\bet\wrho_\gamma\Big)^{-1/2}$ so that $|\delta \bux|_\wrho=1$. The sum on the left  is reduced to the  $(\gamma,\bet)$-terms with $\beta\neq \gamma$,  for which $|\ux_{\gamma}-\ux_\bet|^2=\rate^2$ and $(\al,\gamma)$-terms with $\al\neq \gamma$ for which  $|\ux_\al-\ux_{\gamma}|^2=\rate^2$ and \eqref{eq:Poin} amounts to
$\displaystyle 2\rate^2\sum_{\bet\neq \gamma} \aphi_{\gamma\bet}\wrho_{\gamma} \wrho_\bet 
\geq \frac{\lambda_2(\DelW \APhi)}{\sum_\bet \wrho_\bet}$ and we conclude
\begin{equation}\label{eq:zeta}
\dzA{\gamma}:=\sum_{\bet\neq\gamma} \aphi_{\gamma\bet}\wrho_\bet \geq   \frac{\sum_{\bet\neq \gamma} \wrho_\bet}{\sum_\bet \wrho_\bet}\lambda_2(\DelW\APhi) \geq \zeta_\scrW\lambda_2(\DelW\APhi), \quad \zeta_\scrW=1-\frac{\max_\bet \wrho_\bet}{\sum_\bet \wrho_\bet}>0.
\end{equation}

Next, we extend Lemma \ref{lem:Poin} from vectors to vector-\emph{functions}, seeking an  inequality of the form
\[
\sum_{\alpha,\beta}\aphi_{\alpha\beta} \iint |u_\alpha(\bx)-u_\beta(\by)|^2\rho_\alpha(\bx)\rho_\beta(\by)\dbx\dby \geq \rate \sum_{\alpha,\beta} \iint |u_\alpha(\bx)-u_\beta(\by)|^2\rho_\alpha(\bx)\rho_\beta(\by)\dbx\dby.
\]
Clearly we can use $\rate=\min_{\al\bet} \aphi_{\alpha\beta}$. But there is a sharper threshold, $\rate=\rate_{\APhi}$, which allows some (-- and in fact most) of the entries $\{\aphi_{\al\bet}\}$ to vanish yet  $\rate_{\APhi}>0$. In particular, $\rate_{\APhi}$ is \emph{independent} of the (amplitudes of the) self-interacting terms $\{\aphi_{\alpha\alpha}\}$. 
\begin{lem}[{\bf Weighted Poincar\'e inequality -- vector-functions}]\label{lem:Poinv} Let $\{\wrho_\gamma\}$ be non-negative weight functions with positive  finite masses
$\displaystyle M_\gamma=\int \wrho_\gamma(\bx)\dbx>0$. 
There holds 
\begin{equation}\label{eq:alphaneqbeta}
\begin{split}
\sum_{\alpha\neq \beta} \aphi_{\alpha\beta} \iint |u_\alpha(\bx)-u_\beta(\by)|^2&\wrho_\alpha(\bx)\wrho_\beta(\by)\dbx\dby \\
 & \geq \rate \sum_{\alpha,\beta} \iint |u_\alpha(\bx)-u_\beta(\by)|^2\wrho_\alpha(\bx)\wrho_\beta(\by)\dbx\dby,
 \end{split}
 \end{equation}
 with $\rate=\rate_{\APhi}$ given by
 \[
 \rate_{\APhi} = \lambda_2(\DelMA)\frac{\zetaM}{M}, \qquad \zetaM=1-\frac{\max_\gamma M_\gamma}{M}, \quad M=\sum_\gamma M_\gamma.
 \]

\end{lem}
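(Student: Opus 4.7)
The plan is to reduce the function-valued inequality to its vector-valued counterpart, Lemma~\ref{lem:Poin}, by introducing the weighted means and intrinsic variances of the $u_\al$'s:
\[
\bar u_\al := \frac{1}{M_\al}\int u_\al(\bx)\wrho_\al(\bx)\dbx,\qquad V_\al:= \int |u_\al(\bx)-\bar u_\al|^2\wrho_\al(\bx)\dbx.
\]
Writing $u_\al(\bx)-u_\bet(\by)=(u_\al(\bx)-\bar u_\al)-(u_\bet(\by)-\bar u_\bet)+(\bar u_\al-\bar u_\bet)$, squaring, and using $\int(u_\gamma-\bar u_\gamma)\wrho_\gamma\dbx=0$ to kill all three cross terms, I obtain the Pythagorean-type identity
\[
\iint |u_\al(\bx)-u_\bet(\by)|^2\wrho_\al(\bx)\wrho_\bet(\by)\dbx\dby = M_\bet V_\al + M_\al V_\bet + M_\al M_\bet|\bar u_\al-\bar u_\bet|^2,
\]
which decouples the target inequality into a \emph{fluctuation} part (the $V_\al$'s) and a \emph{means} part (the $|\bar u_\al-\bar u_\bet|^2$'s).

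For the means part I apply Lemma~\ref{lem:Poin} directly to the vector $(\bar u_\al)_\al$ with discrete weights $\scrM=\{M_\gamma\}$; the $\al=\bet$ contributions vanish identically, yielding
\[
\sum_{\al\neq\bet}\aphi_{\al\bet}|\bar u_\al-\bar u_\bet|^2 M_\al M_\bet \geq \frac{\lambda_2(\DelMA)}{M}\sum_{\al,\bet}|\bar u_\al-\bar u_\bet|^2 M_\al M_\bet.
\]
For the fluctuation part I use the symmetry $\aphi_{\al\bet}=\aphi_{\bet\al}$: summing the identity over $\al\neq\bet$ with weights $\aphi_{\al\bet}$, the $V_\al$-contributions on the left of \eqref{eq:alphaneqbeta} collapse to $2\sum_\al V_\al\,\dzA{\al}$ with $\dzA{\al}=\sum_{\bet\neq\al}\aphi_{\al\bet}M_\bet$, while summing over all $\al,\bet$ on the right gives exactly $2M\sum_\al V_\al$. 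The weighted degree bound~\eqref{eq:zeta}, specialized to $\scrW=\scrM$, provides $\dzA{\al}\geq \zetaM\lambda_2(\DelMA)$ uniformly in $\al$, hence
\[
2\sum_\al V_\al\,\dzA{\al}\geq \frac{\zetaM\lambda_2(\DelMA)}{M}\cdot 2M\sum_\al V_\al.
\]

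Since $\zetaM\leq 1$, both estimates admit the common prefactor $\rate_\APhi := \zetaM\lambda_2(\DelMA)/M$, so adding them produces the stated bound. The only subtlety — rather than a serious obstacle — is verifying that the self-interacting entries $\{\aphi_{\al\al}\}$ never enter the left-hand side of \eqref{eq:alphaneqbeta} and are absent from $\DelMA$, so that $\rate_\APhi$ is genuinely independent of them, consistent with the game-of-alignment viewpoint of Remark~\ref{rmk:game}; this is built into the $\al\neq\bet$ restriction and into the definition~\eqref{eq:PhiLap} of the weighted Laplacian. In the possibly infinite-$|\mathcal I|$ regime, convergence of all relevant sums is automatic from the finiteness of $M=\sum_\gamma M_\gamma$ together with the assumed boundedness of the $\aphi_{\al\bet}$.
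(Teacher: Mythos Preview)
Your proof is correct and follows essentially the same approach as the paper's: both introduce the weighted means $\bar u_\al$, use the Pythagorean decomposition into fluctuation and means parts, invoke Lemma~\ref{lem:Poin} for the means and the degree bound~\eqref{eq:zeta} for the fluctuations, and then combine via $\zetaM\leq 1$. The only cosmetic difference is that the paper keeps the three summands $|u_\al(\bx)-\bar u_\al|^2$, $|\bar u_\al-\bar u_\bet|^2$, $|\bar u_\bet-u_\bet(\by)|^2$ separate throughout, whereas you group the first and third into a single fluctuation term $V_\al$ from the outset.
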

The  bound \eqref{eq:alphaneqbeta} is at the heart of matter: note that the  self-interacting terms $\sum_{\alpha} \iint |u_\alpha(\bx)-u_\alpha(\by)|^2\wrho_\alpha(\bx)\wrho_\alpha(\by)\dbx\dby$ are missing on its left  but present in the lower-bound on the right.
\begin{proof}[{Proof of Lemma \ref{lem:Poinv}}] Denote the average, $\displaystyle \overline{u}_\alpha:=\frac{\int \wrho_\alpha u_\alpha(\bx)\dbx}{\int \wrho_\alpha(\bx)\dbx}$. Since
 $\displaystyle \int_\bx \big(u_\al(\bx)-\overline{u}_\al\big)\wrho_\al(\bx)\dbx$ and $\displaystyle \int_\by \big(u_\bet(\by)-\overline{u}_\bet\big)\wrho_\bet(\by)\dby$ vanish, we can  decompose the integral on the left of \eqref{eq:alphaneqbeta}
\[
\begin{split}
\iint |u_\alpha(\bx)-u_\beta(\by)|^2&\wrho_\alpha(\bx)\wrho_\beta(\by)\dbx\dby \\
& \equiv \iint \Big(|u_\alpha(\bx)-\overline{u}_\alpha|^2 + |\overline{u}_\alpha-\overline{u}_\beta|^2 + |\overline{u}_\beta-u_\beta(\by)|^2\Big)\wrho_\alpha(\bx)\wrho_\beta(\by)\dbx\dby.
\end{split}
\]
We bound each of the three integrated terms on the right. Using \eqref{eq:zeta}, the first admits the lower-bound in terms of the weighted Laplacian -- weighted by the vector of masses $\scrM=\{M_\al\}_{\al\in {\mathcal I}}$,
\begin{align*}
\sum_{\alpha\neq \beta}\aphi_{\alpha\beta}\iint &|u_\alpha(\bx)-\overline{u}_\alpha|^2 \wrho_\alpha(\bx)\wrho_\beta(\by)\dbx\dby
 = \sum_\alpha \left(\sum_{\beta\neq \alpha} \aphi_{\alpha\beta}M_\beta\right) \int |u_\alpha(\bx)-\overline{u}_\alpha|^2 \wrho_\alpha(\bx)\dbx \\
& = \sum_\alpha \dzA{\al} \int |u_\alpha(\bx)-\overline{u}_\alpha|^2 \wrho_\alpha(\bx)\dbx \\
& \geq \lambda_2(\DelMA)\frac{\zetaM}{M}\sum_{\alpha,\beta}  \iint |u_\alpha(\bx)-\overline{u}_\alpha|^2 \wrho_\alpha(\bx)\wrho_\beta(\by)\dbx\dby.
\end{align*}
Similarly, the third integrand is lower-bounded by
\begin{align*}
\sum_{\alpha\neq \beta}\aphi_{\alpha\beta}\iint |u_\beta(\bx)-\overline{u}_\beta|^2 &\wrho_\alpha(\bx)\wrho_\beta(\by)\dbx\dby
 =  \sum_\beta \dzA{\bet} \int |u_\beta(\bx)-\overline{u}_\beta|^2 \wrho_\beta(\bx)\dbx \\
& \geq \lambda_2(\DelMA)\frac{\zetaM}{M}\sum_{\alpha,\beta}  \iint |u_\beta(\bx)-\overline{u}_\beta|^2 \wrho_\alpha(\bx)\wrho_\beta(\bx)\dbx\dby.
\end{align*}
Finally, by the scalar weighted Poincar\'{e} inequality \eqref{eq:Poin}, we bound the second integrand
\begin{align*}
\sum_{\alpha\neq \beta} \aphi_{\alpha\beta}\iint |\overline{u}_\alpha-\overline{u}_\beta|^2 \wrho_\alpha(\bx)\wrho_\beta(\by)\dbx\dby
& = \sum_{\alpha\neq\beta}\aphi_{\alpha\beta}|\overline{u}_\alpha-\overline{u}_\beta|^2 M_\alpha M_\beta  \\
 & \geq \frac{\lambda_2(\DelMA)}{M} \sum_{\alpha,\beta}\iint|\overline{u}_\alpha-\overline{u}_\beta|^2 \wrho_\alpha(\bx)\wrho_\beta(\by)\dbx\dby.
\end{align*}
Adding the last three lower-bounds we end up with 
\begin{align*}
\sum_{\alpha\neq \beta} & \aphi_{\alpha\beta}\iint |u_\alpha(\bx)-u_\beta(\by)|^2\wrho_\alpha(\bx)\wrho_\beta(\by)\dbx\dby\\
& \geq \lambda_2(\DelMA)\frac{\zetaM}{M}\sum_{\alpha,\beta}\iint \Big(|u_\alpha(\bx)-\overline{u}_\alpha|^2 + |\overline{u}_\alpha-\overline{u}_\beta|^2 + |\overline{u}_\beta-u_\beta(\by)|^2\Big)\wrho_\alpha(\bx)\wrho_\beta(\by)\dbx\dby\\
& = \lambda_2(\DelMA)\frac{\zetaM}{M}\iint |u_\alpha(\bx)-u_\beta(\by)|^2\wrho_\alpha(\bx)\wrho_\beta(\by)\dbx\dby,
\end{align*}
thus proving \eqref{eq:alphaneqbeta}.
\end{proof}

\begin{rmk}[{\bf Alignment and de-alignment}]\label{rem:dealign} The weighted Poincar\'{e} inequality \eqref{eq:alphaneqbeta} involves the threshold  $\displaystyle \rate_{\APhi}=\lambda_2(\DelMA)\frac{\zetaM}{M}$ which is independent of $\{\aphi_{\alpha\alpha}\}$: if $\APhi$ is connected then the non-diagonal fluctuation terms dominate the self-interacting fluctuations. In fact, this means that we can add self-fluctuations with  \emph{negative} amplitudes:\newline
 assume that $\displaystyle \left\{\begin{array}{ll}\aphi_{\al\bet} \geq 0, & \al\neq \bet,\\
\aphi_{\alpha\bet} \geq - \frac{1}{2}\rate_{\APhi}, & \al=\bet,\end{array}\right.$ then \eqref{eq:alphaneqbeta} still survives
\[
\begin{split}
\sum_{\alpha,\beta}\aphi_{\alpha\beta} \iint &|u_\alpha(\bx)-u_\beta(\by)|^2\wrho_\alpha(\bx)\wrho_\beta(\by)\dbx\dby \\
& \geq \frac{1}{2}\rate_{\APhi}\sum_{\alpha,\beta} \iint |u_\alpha(\bx)-u_\beta(\by)|^2\wrho_\alpha(\bx)\wrho_\beta(\by)\dbx\dby, \qquad \rate_{\APhi}= \lambda_2(\DelMA) \frac{\zetaM}{M}.
\end{split}
\]
\end{rmk}

\section{Smooth solutions must flock}\label{sec:hydro_flocking}

In this section, we prove the main flocking statement in theorem \ref{THM_1}.
The key observation is that the decay of  both -- the energy and uniform fluctuations  are dictated by the  connectivity of the multi-species configuration.
To this end, let $\St$ denote the spatial diameter of the multi-species crowd at time $t$
\begin{equation}\label{eq:diam}
D(t):= \max_{\bx,\by\in {\mathcal S}(t)}|\bx-\by|, \qquad {\mathcal S}(t)= \cup_\al \text{supp}\{\rho_\al(t,\cdot)\}.
\end{equation}
Then  $\Phi(\St)=\{\phi_{\al\bet}(\St)\}$
quantifies the minimal amplitude of communication between species $\al$ and $\bet$ at time $t$.
Our first result quantifies a minimal amount of  connectivity which implies  the decay of energy fluctuations 
\begin{equation}\label{eq:delE}
\delE(t):=  \sum_{\al,\bet\in{\mathcal I}}\iint|\bu_\al(t,\bx)-\bu_\bet(t,\by)|^2\rho_\al(t,\bx)\rho_\bet(t,\by)\dbx\dby.
\end{equation}

\begin{thm}[{\bf Decay of energy fluctuations}]\label{THM_4_1}\mbox{ }\newline
Let $(\rho_{{\al}}(t,\cdot),\bu_{{\al}}(t,\cdot))\in L_+^{1}(\rr^d)\times W^{1,\infty}(\rr^d), \al \in \mathcal{I}$, be a strong solution of the multi-species dynamics  \eqref{Hydrodynamic_Flocking_eqs}, subject to initial conditions $(\rho_{\al 0},\bu_{\al 0})$ with initial energy fluctuations $\delEz=\delE(0)$.
Then we have the apriori bound
\begin{equation}\label{u_infty}
\delE(t) \leq \delEz \cdot exp\, \Big\{\displaystyle -2\zetaM\int^t_0 \lambda_2(\DelMP(D(\tau)))\d \tau\Big\}, \qquad \zetaM=1-\frac{\max_\al M_\al}{\sum_\al M_\al}.
\end{equation}
In particular, if the crowd dynamics satisfies a `fat-tail' connectivity condition of Pareto type (but observe the dependence on $D(r)$ in contrast to \eqref{eq:Pareto})
\begin{equation}\label{eq:pareto}
 \lambda_2(\DelMP(D(r))) \gtrsim \frac{1}{(1+r)^{\theta}}, \qquad  \theta<1, 
\end{equation}
then  $\delE(t)$  decays at fractional-exponential rate
\begin{equation}\label{eq:Efrac}
\delta E(t) \lesssim \delEz\cdot e^{\displaystyle -2\rate_1\!\cdot\! t^{1-\theta}}, \qquad \rate_1=\frac{\zetaM}{1-\theta}.
\end{equation}
\end{thm}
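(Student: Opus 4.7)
}

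The plan is to differentiate $\delta E(t)$ in time, turn the derivative into a symmetric bilinear form in the velocity fluctuations, use radial monotonicity of $\phi_{\al\bet}$ to freeze the kernels at the diameter $D(t)$, and then close the estimate with the weighted Poincar\'e inequality of Lemma \ref{lem:Poinv}. First, expanding $|\bu_\al(\bx)-\bu_\bet(\by)|^2=|\bu_\al(\bx)|^2+|\bu_\bet(\by)|^2-2\bu_\al(\bx)\cdot\bu_\bet(\by)$ and integrating in $(\bx,\by)$, together with the conserved mass $M=\sum_\al M_\al$ and conserved total momentum $\bm_0=\sum_\al\int\rho_\al\bu_\al\,\dbx$, yields the useful identity
\[
\delta E(t)=2M\sum_{\al\in{\mathcal I}}\int|\bu_\al(t,\bx)|^2\rho_\al(t,\bx)\,\dbx\ -\ 2|\bm_0|^2.
\]

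Next, I would derive the kinetic-energy balance species by species. Combining the continuity equation for $\rho_\al$ with the momentum equation in \eqref{Hydrodynamic_Flocking_eqs} and dotting with $\bu_\al$ produces, after integration in $\bx$,
\[
\tfrac{1}{2}\ddt\int\rho_\al|\bu_\al|^2\,\dbx=\sum_{\bet\in{\mathcal I}}\iint\phi_{\al\bet}(|\bx-\by|)\,\bu_\al(\bx)\!\cdot\!\bigl(\bu_\bet(\by)-\bu_\al(\bx)\bigr)\rho_\al(\bx)\rho_\bet(\by)\,\dbx\dby.
\]
Summing over $\al$ and exploiting the symmetry $\phi_{\al\bet}=\phi_{\bet\al}$ to relabel indices, the right-hand side symmetrizes to $-\tfrac{1}{2}\sum_{\al,\bet}\iint\phi_{\al\bet}|\bu_\al(\bx)-\bu_\bet(\by)|^2\rho_\al\rho_\bet\,\dbx\dby$; plugging this back into the expression for $\delta E(t)$ gives the clean dissipation law
\[
\ddt\delta E(t)=-2M\sum_{\al,\bet\in{\mathcal I}}\iint\phi_{\al\bet}(|\bx-\by|)\,|\bu_\al(t,\bx)-\bu_\bet(t,\by)|^2\rho_\al(t,\bx)\rho_\bet(t,\by)\,\dbx\dby.
\]

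The core step is now to lower-bound the dissipation. Since $\phi_{\al\bet}$ is radially decreasing, for $\bx\in\supp\rho_\al(t,\cdot)$ and $\by\in\supp\rho_\bet(t,\cdot)$ one has $\phi_{\al\bet}(|\bx-\by|)\geq\phi_{\al\bet}(D(t))$ with $D(t)$ as in \eqref{eq:diam}; moreover the $\al=\bet$ summands are non-negative and may be dropped. Applying Lemma \ref{lem:Poinv} with weights $\wrho_\al=\rho_\al(t,\cdot)$, masses $\scrM=\{M_\al\}$, and array $\APhi=\Phi(D(t))$ gives
\[
\sum_{\al\neq\bet}\phi_{\al\bet}(D(t))\iint|\bu_\al(\bx)-\bu_\bet(\by)|^2\rho_\al\rho_\bet\,\dbx\dby\ \geq\ \lambda_2\bigl(\DelMP(D(t))\bigr)\,\frac{\zetaM}{M}\,\delta E(t),
\]
whence $\frac{d}{dt}\delta E(t)\leq -2\zetaM\,\lambda_2(\DelMP(D(t)))\,\delta E(t)$, and Gr\"onwall's inequality yields \eqref{u_infty}. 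Under the Pareto assumption \eqref{eq:pareto}, direct integration $\int_0^t(1+\tau)^{-\theta}\,\d\tau\gtrsim\tfrac{1}{1-\theta}t^{1-\theta}$ converts \eqref{u_infty} into the fractional-exponential decay \eqref{eq:Efrac}.

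The only subtlety I expect is the symmetrization step in the energy balance, which must be carried out carefully to exploit $\phi_{\al\bet}=\phi_{\bet\al}$; beyond that the argument is essentially an application of Lemma \ref{lem:Poinv}, which does the heavy lifting and is precisely the reason the self-interacting kernels $\phi_{\al\al}$ play no role in the rate.
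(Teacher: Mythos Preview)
Your proposal is correct and follows essentially the same approach as the paper: relate $\delta E$ to the total kinetic energy via conservation of mass and momentum, compute the energy dissipation and symmetrize using $\phi_{\al\bet}=\phi_{\bet\al}$, freeze the kernels at $D(t)$ by radial monotonicity, and close with Lemma~\ref{lem:Poinv} followed by Gr\"onwall. The only cosmetic difference is that the paper writes the relation as $\ddt\delta E(t)=2M\,\ddt E(t)$ rather than your explicit identity $\delta E(t)=2M\sum_\al\int|\bu_\al|^2\rho_\al-2|\bm_0|^2$, but these are equivalent.
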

\begin{rmk}
Again, we observe that while  the diagonal terms in $\delE$ on the left of \eqref{u_infty} account  for fluctuations within the same species, $\iint \sum_{\al=\bet} |\bu_\al(\bx,t)-\bu_\bet(\by,t)|^2\rho_\al\rho_\bet \dbx\dby$, the upper-bound on the right of \eqref{u_infty} involves $\lambda_2(\DelMP)$ which is independent of (the amplitude of) the self-interaction terms, $\{\phi_{\al\al}\}$. One learns about the behavior of its own species by its reflection through interactions with the other connected species. In fact, arguing in view of remark \ref{rem:dealign} we can even allow for self-interactions with \emph{de-alignment}, $\displaystyle \phi_{\al\al} \geq -\lambda_2(\DelMP)\frac{\zetaM}{2M}$, and yet the overall inter-species alignment will override, yielding that the crowd will align towards $\buinf$.  
\end{rmk}
\begin{proof} Since the total mass, $\displaystyle \Min=\sum_\alpha \int\rho_\alpha(t,\bx) \dbx$, and total momentum, $\displaystyle \sum_\alpha \int \rho_\alpha(t,\bx) \bu_\alpha(t,\bx) \dbx$, are conserved in time, it follows that the decay rate of the fluctuations is the same as the decay rate of the total kinetic energy,
\bel\label{eq:energy}
\frac{\D}{\D t} \delta E(t) = 2\Min \frac{\D}{\D t}E(t), \qquad E(t):=\sum_{\alpha\in \mathcal{I}} \int \rho_\alpha(t,\bx) |\bu_\alpha(t,\bx)|^2 \dbx.
\eel
  A straightforward computation using the multi-species dynamics  \eqref{Hydrodynamic_Flocking_eqs} yields  
\begin{align*}
\frac{\D}{\D t}\bigg(&\sum_{\al\in\mathcal{I}}\int \rho_\al |\bu_\al|^2 \dbx\bigg)=
2\int \sum_{\al,\beta\in\mathcal{I}}\big\langle \rho_\al \bu_\al,\, \phi_{\alpha\beta}*(\rho_\beta \bu_\beta)- (\phi_{\alpha\beta}*\rho_\beta) \bu_\al \big\rangle\dbx\nonumber\\
=& 2\iint \sum_{\al,\beta\in\mathcal{I}}\Big(\big\langle \rho_\al(\bx) \bu_\al(\bx),\, \phi_{\alpha\beta}(|\bx-\by|)\rho_\beta(\by)\bu_\beta(\by)\big\rangle \\
 & \hspace*{7.9cm} -\rho_\al(\bx)|\bu_\al(\bx)|^2 \phi_{\alpha\beta}(|\bx-\by|)\rho_\beta(\by)\Big)\dbx\dby\nonumber\\
=& 2\iint \sum_{\al,\beta\in\mathcal{I}}\big\langle \rho_\al(\bx) \bu_\al(\bx),\,\phi_{\alpha\beta}(|\bx-\by|)\rho_\beta(\by) \bu_\beta(\by)\big\rangle\dbx\dby\nonumber \\
 & -\iint \sum_{\al,\beta\in\mathcal{I}}\Big(\rho_\al(\bx)|\bu_\al(\bx)|^2 \phi_{\alpha\beta}(|\bx-\by|)\rho_\beta(\by) +  \rho_\beta(\by)|\bu_\beta(\by)|^2 \phi_{\beta\alpha}(|\bx-\by|)\rho_\al(\bx)\Big) \dbx\dby\nonumber\\
=& -\iint \sum_{\al,\beta\in\mathcal{I}}\phi_{\alpha\beta}(|\bx-\by|) |\bu_\al(t,\bx)-\bu_\beta(t,\by)|^2\rho_\al(t,\bx)\rho_\beta(t,\by)\dbx\dby. \nonumber
\end{align*}
Since $\phi_{\al\bet}$ are decreasing, 
$\phi_{\alpha\beta}(|\bx-\by|) \geq \phi_{\al\bet}(\St)$, hence
\begin{equation}\label{eq:thisisE}
\frac{\D}{\D t}E(t) \leq  - \sum_{\al,\beta\in\mathcal{I}}\phi_{\alpha\beta}(D(t)) \iint|\bu_\al(t,\bx)-\bu_\beta(t,\by)|^2\rho_\al(t,\bx)\rho_\beta(t,\by)\dbx\dby.
\end{equation}
We now appeal to the vector-function version of Poincar\'{e} inequality in Lemma \ref{lem:Poinv}, obtaining\footnote{To be precise, here one employs the \emph{vector} statement
\[
\frac{\lambda_2(\DelMP)}{\sum_\al M_\al}=\min_{|\delbu|_M=1} \sum_{\al \neq \bet \in {\mathcal I}}\Phi_{\al\bet}|{\mathbf u}_\al-{\mathbf u}_\bet|^2M_{\al}M_\bet,   
\qquad |\delbu|_M^2= \sum_{\al\neq \bet \in {\mathcal I}}|{\mathbf u}_\al-{\mathbf u}_\bet|^2M_{\al}M_\bet, \quad {\mathbf u} \in \rr^d,
\]
which follows by aggregating the scalar components of \eqref{eq:alphaneqbeta} (as was done in \cite[Sec 3.1]{CS07}).}
\[
\frac{1}{2M}\frac{\D}{\D t}\delE(t) \leq  -\lambda_2(\DelMP(D(t)))\frac{\zetaM}{M} \delE(t),
\]
and the desired bound \eqref{u_infty} follows.
\end{proof}

\noindent
The decay of energy fluctuations, $\delE(t)$, implies  decay of pointwise fluctuations
\[
\delV(\bu(t))=\max_{\al,\beta\in\mathcal{I}}\max_{\bx,\by\in{\mathcal S}(t)}|\bu_\al(t,\bx)-\bu_\beta(t,\by)|.
\]
\begin{thm}[{\bf Decay of uniform fluctuations}]\label{THM_4_2}\mbox{ }\newline
Let $(\rho_{{\al}}(t,\cdot),\bu_{{\al}}(t,\cdot))\in L_+^{1}(\rr^d)\times W^{1,\infty}(\rr^d), \al \in \mathcal{I}$, be a strong solution of the multi-species dynamics  \eqref{Hydrodynamic_Flocking_eqs}, subject to initial conditions $(\rho_{\al 0},\bu_{\al 0})$, and assume the crowd dynamics satisfies the `fat-tail' connectivity condition \eqref{eq:pareto}. Then  $\delV(\bu(t))$ decays at fractional-exponential rate: there exist constants $\displaystyle C_2=C(\max_{\al,\bet}\phi_{\al\bet}(0), M)>0$ and  $\rate_2=\rate(\theta,M)>0$ such that
\begin{equation}\label{eq:vfrac}
\delV(\bu(t)) \lesssim C_2\cdot \delV_0\cdot e^{\displaystyle -2\rate_2\cdot t^{1-\theta}}, \qquad \delVz=\delV(\bu(0)). 
\end{equation}
\end{thm}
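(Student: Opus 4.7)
The plan is to derive a sup-level analogue of the $L^2$ energy inequality of Theorem \ref{THM_4_1}. The target is a pointwise differential inequality
\[
\ddt \delV(\bu(t))^2 \leq -2\zetaM\lambda_2(\DelMP(D(t)))\,\delV(\bu(t))^2,
\]
from which everything else follows by Gronwall. Indeed, this would imply $\delV(\bu(\cdot))$ non-increasing, so the diameter would satisfy $\dot D(t)\leq \delV(\bu(t))\leq \delV_0$, hence $D(t)\leq D_0+\delV_0\, t$, and the Pareto `fat-tail' assumption \eqref{eq:pareto} would give
\[
\int_0^t\lambda_2(\DelMP(D(\tau)))\,\d\tau \;\gtrsim\; \int_0^t\frac{\d\tau}{(1+D_0+\delV_0\tau)^\theta} \;\gtrsim\; t^{1-\theta},
\]
producing the stated fractional-exponential rate with $\rate_2=\rate_2(\theta,M)$, and a multiplicative constant $C_2=C_2(\max_{\alpha\bet}\phi_{\alpha\bet}(0),M)$ absorbing the initial pre-factor.

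To derive the differential inequality, I would fix $t_0$ and pick $(\alpha^*,\bx^*),(\beta^*,\by^*)$ realizing $\delV(\bu(t_0))$, then transport these by the characteristic flows $\dot\bx^*(t)=\bu_{\alpha^*}(t,\bx^*(t))$, $\dot\by^*(t)=\bu_{\beta^*}(t,\by^*(t))$. Along these characteristics the momentum equation in \eqref{Hydrodynamic_Flocking_eqs} reduces to
\[
\ddt\bu_{\alpha^*}(t,\bx^*(t)) = \sum_{\gamma\in\mathcal I}\int\phi_{\alpha^*\gamma}(|\bx^*(t)-\bz|)\big(\bu_\gamma(t,\bz)-\bu_{\alpha^*}(t,\bx^*(t))\big)\rho_\gamma(t,\bz)\,\d\bz,
\]
and analogously for $\bu_{\beta^*}(t,\by^*(t))$. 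Differentiating $|\bu_{\alpha^*}(t,\bx^*(t))-\bu_{\beta^*}(t,\by^*(t))|^2$, using the monotonicity $\phi_{\alpha\bet}(|\bx-\by|)\geq \phi_{\alpha\bet}(D(t))$, and decomposing each $\bu_\gamma(t,\bz)=\overline\bu_\gamma(t)+\big(\bu_\gamma(t,\bz)-\overline\bu_\gamma(t)\big)$ with species averages $\overline\bu_\gamma(t):=M_\gamma^{-1}\int\bu_\gamma(t,\bx)\rho_\gamma(t,\bx)\dbx$, one is left with a quadratic form in the finite-dimensional vector $\big(\bu_{\alpha^*}(\bx^*),\bu_{\beta^*}(\by^*),\{\overline\bu_\gamma\}_{\gamma\in\mathcal I}\big)$ weighted by the masses $\{M_\gamma\}$. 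The vector weighted Poincar\'e inequality \eqref{eq:Poin} of Lemma \ref{lem:Poin} then extracts the factor $\lambda_2(\DelMP(D(t)))\zetaM/M$ from this quadratic form, while the residuals $|\bu_\gamma(t,\bz)-\overline\bu_\gamma(t)|\leq \delV(\bu(t))$ appear multiplied by kernel amplitudes $\phi_{\alpha^*\gamma}(0),\phi_{\beta^*\gamma}(0)$ and masses $M_\gamma$.

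The main obstacle is precisely the absorption of these residuals. Since each residual contributes a term of the form $\phi_{\alpha\bet}(0)M_\bet\delV(\bu(t))^2$, while the good term extracted by Lemma \ref{lem:Poin} contributes $\zetaM\lambda_2(\DelMP(D(t)))\delV(\bu(t))^2$, absorption is possible only when the initial fluctuation $\delV_0$ is not too large relative to $\max_{\alpha\bet}\phi_{\alpha\bet}(0)\, M$; this is exactly the quantitative content of the constant $C_2$ in the theorem. Once the absorption is carried out -- routing the connectivity information through the mass-weighted averages $\overline\bu_\gamma$ in precisely the way Lemma \ref{lem:Poinv} does at the $L^2$ level -- the differential inequality is established, and the Gronwall computation sketched in the first paragraph concludes with $\delV(\bu(t))\lesssim C_2\,\delV_0\,\exp(-2\rate_2\, t^{1-\theta})$.
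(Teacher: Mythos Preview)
Your proposal has a genuine gap: the target differential inequality
\[
\ddt \delV(\bu(t))^2 \leq -2\zetaM\lambda_2(\DelMP(D(t)))\,\delV(\bu(t))^2
\]
is too strong, and the absorption step you describe cannot close it. The residual terms coming from $|\bu_\gamma(\bz)-\overline\bu_\gamma|\leq \delV(\bu(t))$ enter with amplitude $\sum_\gamma\phi_{\al^*\gamma}(0)M_\gamma\cdot\delV(\bu(t))^2$, while the favorable term extracted by the weighted Poincar\'e inequality carries the coefficient $\zetaM\lambda_2(\DelMP(D(t)))$, which depends on the kernels evaluated at the \emph{large} scale $D(t)$. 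Since $\phi_{\al\bet}(0)\geq\phi_{\al\bet}(D(t))$ and both terms scale identically in $\delV(\bu(t))^2$, no smallness of $\delV_0$ can make the bad term subordinate to the good one --- the ratio of the two coefficients is independent of $\delV_0$. You have also misread the role of $C_2$: in the statement it is a multiplicative constant in the \emph{conclusion}, not a smallness threshold on $\delV_0$ in the hypotheses; the theorem is unconditional in $\delV_0$.

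The paper does not attempt a clean inequality for $\delV$ alone. Projecting onto a fixed unit vector $\bw$ and tracking $u_\pm(t)=\max/\min_{\al,\bx}\langle\bu_\al(t,\bx),\bw\rangle$, it splits the right-hand side as $\sum_\bet c_{\al_+\bet}M_\bet\langle\overline\bu_\bet-\buinf,\bw\rangle+\sum_\bet c_{\al_+\bet}M_\bet\langle\buinf-\bu_+,\bw\rangle$. The second sum has a sign and, via the degree bound \eqref{eq:zeta}, produces the damping $-\zetaM\lambda_2(\DelMP(D(t)))\delV$. The first sum is \emph{not} absorbed: it is bounded by $C_\phi(\delE(t))^{1/2}$ using Cauchy--Schwarz, and then the already-established fractional-exponential decay of $\delE(t)$ from Theorem~\ref{THM_4_1} is invoked. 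The resulting inequality
\[
\ddt\delV(\bu(t))\leq -\zetaM\lambda_2(\DelMP(D(t)))\,\delV(\bu(t))+2C_\phi(\delE(t))^{1/2}
\]
is what integrates to \eqref{eq:vfrac}. In short, the decay of $\delV$ is obtained \emph{through} the decay of $\delE$, not independently of it; your plan to bypass the $L^2$ estimate is where the argument breaks.
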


\begin{proof}
We consider the strong solution $(\rho_\al,\bu_\al)$ in the non-vacuous region
$\bx,\by\in {\mathcal S}$, where the alignment terms on the right of \eqref{Hydrodynamic_Flocking_eqs} admits the usual commutator form \cite{ShvydkoyTadmorI}
\bel\label{EQ:hydrodynamic_flocking}
\pa_t  \bu_\al+ (\bu_\al\cdot \nabla) \bu_\al=\sum_{\beta\in \mathcal{I}}\{
\phi_{\alpha\beta}*(\rho_\beta \bu_\beta)-(\phi_{\alpha\beta}*\rho_\beta)\bu_\al\}, \qquad \forall\alpha,\beta\in {\mathcal I}.
\eel
Arguing along the lines of  \cite{HeTadmor17}, we first fix an arbitrary unit vector $\mathbf{w}\in \mathbb{R}^d$ and project \eqref{EQ:hydrodynamic_flocking} onto the space spanned by $\mathbf{w}$ to get
\begin{align*}
(\pa_t+\bu_\al\cdot\na)\lan \bu_\al(t,\mathbf{x}),\mathbf{w}\ran=\sum_{\beta\in\mathcal{I}}\int\phi_{\al\beta}(|\bx-\by|)(\lan\bu_\beta(t,\by),\mathbf{w}\ran-\lan\bu_\al(t,\bx),\mathbf{w}\ran)\rho_\beta(t,\by)\mathrm{d}\by.
\end{align*} 
Now we assume that $\lan\bu_{\al}(t,\bx),\mathbf{w}\ran$ reaches a maximum  value at $(\bx(t),\al(t))=(\bx_+(t),\al_+(t))$ and a minimum value at $((\bx(t),\al(t))=(\bx_-(t),\al_-(t)))$, denoting
\[
u_+(t):=\max_{\al\in {\mathcal I}}\sup_{\bx\in {\mathcal S}(t)}\lan\bu_\al(t,\bx), \mathbf{w}\ran=\bu_{\al_+(t)}(\bx_+(t)).
\] 
We abbreviate $c_{\al\bet}(t):=\phi_{\al\bet}(D(t))$ and  $\displaystyle \overline{\bu}_\bet(t):=\frac{1}{M_\bet}\int \rho_\bet\bu_\bet(t,\by)\dby$. Direct computation  of the time evolution of $u_+(t)$ yields, 
\begin{equation}\label{eq:uplus}
\begin{split}
\frac{\D}{\D t}u_+(t)=&\sum_{\beta\in\mathcal{I}}\int\phi_{\al_+\beta}(|\bx_+-\by|)\big(\lan\bu_\beta(t,\by), \mathbf{w}\ran-\lan\bu_{\al_+}(t,\bx_+),\mathbf{w}\ran\big)\rho_\beta(t,\by)\mathrm{d}\by\\
\leq &\sum_{\beta\in\mathcal{I}}c_{\al_+\beta}\int \big(\lan \bu_\beta(t,\by),\mathbf{w}\ran-\lan\bu_+(t),\mathbf{w}\ran\big)\rho_\beta(t,\by)\mathrm{d}\by
\\
=&\sum_{\beta\in\mathcal{I}}c_{\al_+\beta}M_\beta \lan\overline{\bu}_\beta(t) -\bu_+(t),\mathbf{w}\ran\\
=&\sum_{\beta\in\mathcal{I}}c_{\al_+\beta}M_\beta \lan\overline{\bu}_\beta(t) -\buinf,\mathbf{w}\ran +\sum_{\beta\in\mathcal{I}}c_{\al_+\beta}M_\beta \lan\buinf-\bu_+(t),\mathbf{w}\ran=:I + II
\end{split}
\end{equation}
We proceed to show that the first term is bounded by the (rapidly decaying) energy fluctuations while the  second term will contribute to the pointwise fluctuations. Indeed, since 
\[
c_{\al\bet}(t) \leq \max_{\al,\bet}\phi_{\al\bet}(D_0)=:C_{\phi},
\]
 and 
$\displaystyle M_\beta \big(\overline{\bu}_\beta(t) -\buinf\big)\equiv \frac{1}{M}\sum_\al \iint (\bu_\bet(t,\by)-\bu_\al(t,\bx))\rho_\al(t,\bx)\rho_\bet(t,\by) \dbx\dby$, then by Cauchy-Schwarz  we find
\begin{align*}
I \leq\, &\frac{C_{\phi}}{M}\,\sum_{\al,\bet} \Big(\iint|\bu_\bet(t,\by)-\bu_\al(t,\bx)|^2\rho_\al(t,\bx)\rho_\bet(t,\by) \dbx\dby\Big)^{1/2}\Big(\iint \rho_\al(t,\bx)\rho_\bet(t,\by) \dbx\dby\Big)^{1/2}\\
\leq\, & \frac{C_{\phi}}{M}\,\Big(\sum_{\al,\bet} \iint|\bu_\bet(t,\by)-\bu_\al(t,\bx)|^2\rho_\al(t,\bx)\rho_\bet(t,\by) \dbx\dby \times \sum_{\al,\bet}\iint \rho_\al(t,\bx)\rho_\bet(t,\by) \dbx\dby\Big)^{1/2}\\
& =C_{\phi} \big(\delE(t)\big)^{1/2}.
\end{align*}
On the other hand, since $\lan \buinf-\bu_+,\mathbf{w}\ran\leq0$, we use  the reversed lower bound \eqref{eq:zeta} 
\begin{align*}
II \leq \dzPD{\al_+}\lan\buinf -\bu_+(t),\mathbf{w}\ran
\leq \zetaM\lambda_2(\DelMP(D(t))) \big(\overline{u}_\infty-u_+(t)\big), \quad \overline{u}_\infty:=\lan\buinf,{\mathbf w}\ran.
\end{align*}
The last two inequalities yield
\[
\frac{\D}{\D t}u_+(t) \leq C_{\phi}\big(\delE(t)\big)^{1/2} +\zetaM\lambda_2(\DelMP(D(t))) \big(\overline{u}_\infty-u_+(t)\big);
\]
 similarly, we estimate the time evolution of 
$\displaystyle u_-(t):=\min_{\al\in{\mathcal I}}\inf_{\bx\in{\mathcal S}}\lan\bu_\al(t,\bx), \mathbf{w}\ran$ obtaining
\[
\frac{\D}{\D t}u_-(t) \geq -C_{\phi}\big(\delE(t)\big)^{1/2} +\zetaM\lambda_2(\DelMP(D(t))) \big(\overline{u}_\infty-u_-(t)\big).
\]
The difference of the last two bounds yields the apriori bound on $\delV(u(t)):=u_+(t)-u_-(t)$, 
\begin{equation}\label{eq:aprioridelV}
\frac{\D}{\D t}\delV(u(t))\leq -\zetaM\lambda_2(\DelMP(D(t)))\cdot \delV(u(t))+
2C_{\phi} (\delE(t))^{1/2}.
\end{equation}
Observe that  $\displaystyle \delV(u(t))=\max_{\al,\bet\in{\mathcal I}}\sup_{\bx,\by\in {\mathcal S}(t)}\langle {\bu}_\al(t,\bx)- {\bu}_\bet(t,\by),\bw\rangle$ is   the diameter of projected velocities on arbitrary unit vector  $\bw$. The assumed \eqref{eq:pareto} implies that $\delE(t)$ admits 
 the fractional exponential decay  \eqref{eq:Efrac}, and we end up with,
\begin{equation}\label{eq:dvde}
\ddt\delV(\bu(t)) \leq -\zetaM\lambda_2(\DelMP(D(t)))\cdot\delV(\bu(t)) + 2C_{\phi}\cdot
(\delEz)^{1/2}e^{\displaystyle -\rate_1\!\cdot\! t^{1-\theta}}.
\end{equation}
Finally, $(\delE_0)^{1/2}\leq M\cdot \delVz$ and by assumption $\lambda_2(\DelMP(D(t))) \gtrsim(1+t)^{-\theta}$, hence \eqref{eq:vfrac} follows by integration of \eqref{eq:dvde}.
\end{proof}
\begin{rmk} Revisiting \eqref{eq:uplus} we find 
\begin{align*}
\frac{\D}{\D t}u_+(t)\leq&\sum_{\beta\in\mathcal{I}}\phi_{\al_+\beta}(D(t))M_\beta \lan\overline{\bu}_\beta(t) -\bu_+(t),\mathbf{w}\ran \leq \dzPD{\al_+}\max_{\beta\in\mathcal{I}}\lan\overline{\bu}_\beta(t) -\bu_+(t),\bw\ran\\
\leq &\zetaM\lambda_2(\DelMP(D(t)))\max_{\beta\in\mathcal{I}}\lan\overline{\bu}_\beta (t)-\bu_+(t),\bw\ran,
\end{align*}
and likewise
\begin{align*}
\frac{\D}{\D t}u_-(t)
\geq  \zetaM\lambda_2(\DelMP(D(t)))\min_{\beta\in\mathcal{I}}\lan\overline{\bu}_\beta(t)-\bu_-(t),\bw\ran.
\end{align*}
The difference of the last two estimates yield the apriori bound
\begin{equation}\label{eq:flucE}
\frac{\D}{\D t}\delV(u(t)\leq \zetaM\lambda_2(\DelMP(D(t)))\cdot\Big(-\delV(u(t))+
\delV(\overline{u}(t))\Big), \qquad \delV(\overline{u}):=\overline{u}_+-\overline{u}_-.
\end{equation}
Since the diameter of averaged velocities $\delV(\overline{u})$ is smaller than the diameter of the velocities $\delV(u)$, \eqref{eq:flucE} implies  that the pointwise velocity diameter does not increase
\begin{equation}\label{eq:pointwise}
 \delV(\overline{\bu}(t)) \leq \delV(\bu(t)) \ \leadsto \ \delV(\bu(t)) \leq \delV_0. 
\end{equation}
Note that the apriori bound \eqref{eq:pointwise} does not require any connectivity assumption;  theorem \ref{THM_4_2}  quantifies how an additional  `fat-tail' connectivity \eqref{eq:pareto} enforces the fractional exponential decay of $\delV(\bu(t))$.
\end{rmk}

The last two theorems still require information on the dynamic growth of the supports ${\mathcal S}(t)=\cup_\al\text{supp}\, \{\rho_\al(t,\cdot)\}$, in order to access the possible growth  of $\St$ and the corresponding decay  of 
$\phi_{\al\bet}(\St)$ in \eqref{eq:pareto}. Our next result provides apriori bound how on far the different species can spread out, and this enables us to quantify flocking in terms of the connectivity of $\{\phi_{\al\bet}(r)\}$, \emph{independent} of the diameter dynamics. To this end, observe that according to the apriori bound \eqref{eq:pointwise},
 the  velocities of the different species remain bounded, 
  and hence the spatial diameter of  the  support  of the crowd can grow at most linearly in time: indeed, tracing the particle paths $(\bx(t),\by(t))\in {\mathcal S}$ yields 
\begin{equation}\label{eq:delVt}
\ddt \St \lesssim \delV(\bu(t)) \ \ \leadsto \ \ \St= \max_{\bx,\by\in {\mathcal S}(t)} |\bx-\by| \lesssim D_0 + \delV_0\cdot t.
\end{equation}
We conclude  the lower-bound (recall that $\phi_{\al\bet}$ are decreasing)
$\phi_{\al\bet}(\St)  \gtrsim \phi_{\al\bet}\big(D_0+\delV_0 \cdot t\big)$. 
We are now ready to prove theorem \ref{THM_1}.
\begin{proof}[Proof of theorem \ref{THM_1}\nopunct] proceeds in three steps.

\medskip\noindent
{\bf Step \#1. Fractional exponential decay}. The variational   characterization of the  Fiedler number  \eqref{eq:CF}, implies that
$\lambda_2(\cdot)$ is an increasing function of the non-negative entries in its argument,  
\begin{equation}\label{eq:delCandV}
\begin{split}
\frac{\lambda_2(\DelMP(D(t)))}{\Min} & = \min_{|\delbu|_M=1} \sum_{\al,\bet}\phi_{\al\bet}(\St)\cdot |u_\al-u_\bet|^2M_{\al}M_\bet
\\
 & \gtrsim  \min_{|\delbu|_M=1} \sum_{\al,\bet}\phi_{\al\bet}\big(D_0+\delV_0\cdot t\big)\cdot |u_\al-u_\bet|^2M_{\al}M_\bet \qquad \\
 & =
 \frac{\lambda_2\big(\DelMP\big(D_0+\delV_0 \cdot t\big)\big)}{\Min}.
\end{split}
\end{equation}
Hence,  the  Pareto decay  $\lambda_2(\DelMP(r))\gtrsim (1+r)^{-\theta}$ assumed in \eqref{eq:Pareto} implies $\lambda_2(\DelMP(D(t))) \gtrsim (1+D_0+\delV_0\cdot t)^{-\theta}$ and  the apriori estimate \eqref{u_infty} implies
\[
\delta E(t) \lesssim \delEz\cdot e^{\displaystyle -2\rate_3\!\cdot\! t^{1-\theta}} , \quad \rate_3:= \frac{\zetaM}{(1-\theta)\cdot\delV_0}. 
\]

\noindent
{\bf Step \#2. Finite diameter}. The Pareto-type condition \eqref{eq:Pareto} implies an improved flocking rate of full exponential rate. Indeed,  the apriori bound \eqref{eq:aprioridelV} together with \eqref{eq:delCandV} yield
\[
\ddt\delV(\bu(t)) \lesssim -(1+D_0+\delV_0\cdot t)^{-\theta} \cdot\delV(\bu(t)) +  2C_\phi\cdot (\delta E_0)^{1/2}e^{\displaystyle -\rate_3\cdot t^{1-\theta}}.
\]
As before we use $(\delta E_0)^{1/2} \leq M\cdot \delV_0$; integrating the last inequality we find that $\delV(\bu(t))$ satisfies a fractional exponential decay 
\[
\delV(\bu(t)) \lesssim \delV_0\cdot e^{\displaystyle -\rate_4\!\cdot\! t^{1-\theta}}, \qquad \rate_4=\min\{\rate_1,\rate_3\}>0
\]
 which in turn implies  a bounded spatial diameter uniformly in  time\footnote{Tracing the dependence of $C_\theta$ on $\theta$ we find
 $\displaystyle C_\theta \lesssim \int_0^\infty e^{-\rate_4\cdot t^{1-\theta}}\dt$ with $\rate_4 \lesssim \frac{1}{1-\theta}$ which yield\newline $C_\theta \sim (1-\theta)^{\frac{\theta}{1-\theta}}$.},
\begin{equation}\label{eq:Dbd}
\ddt \St \leq \delV(\bu(t)) \lesssim \delV_0\cdot e^{\displaystyle -\rate_4\!\cdot\! t^{1-\theta}}  \ \leadsto  \ \St \leq D_\infty \, \leq  D_0+C_\theta \cdot\delV_0 < \infty.
\end{equation}
\noindent
{\bf Step \#3. Exponential decay}. We now have a \emph{uniform} lower bound on the minimal communication, $\phi_{\al\bet}(D(t)) \geq \phi_{\al\bet}(D_\infty)$. Hence, the monotone increasing dependence of $\lambda_2(\DelMA)$ on the entries of $\APhi$, consult \eqref{eq:CF}, implies
\begin{equation}\label{eq:laminf}
\lambda_2(\DelMP(D(t))) \geq \lambda_2(\DelMP_\infty)>0, \qquad
\Phi_\infty:=\{\phi_{\al\bet}(D_\infty)\}.
\end{equation}
We revisit the energy apriori fluctuations bound \eqref{u_infty}, obtaining the exponential decay
\[
\delta E(t) \leq \delEz \cdot e^{\displaystyle -2\rate t}, \qquad \rate=\zetaM\lambda_2(\DelMP_\infty).
\]
Since $\displaystyle 
\sum_\al\int|\bu_\al(t,\bx)-\overline{\bu}_\infty|^2\rho_\al(t,\bx)\dbx
\equiv\frac{1}{2\Min}\delta E(t)$, exponential flocking \eqref{eq:revisit} follows. Moreover, revisiting the uniform fluctuations \eqref{eq:vfrac} with \eqref{eq:laminf} yields the exponential decay
\begin{equation}\label{eq:vexp}
\max_{\al\in{\mathcal I}}\sup_{\bx\in{\mathcal S}(t)}|\bu_\al(t,\bx)-\overline{\bu}_\infty| \lesssim \delV_0\cdot e^{\displaystyle -\rate t}.
\end{equation}
\end{proof}

\section{Existence of global smooth solutions}\label{sec:global}
\subsection{Critical threshold in one-dimensional flocking dynamics}
\begin{proof}[Proof of Theorem \ref{Thm_2}]
Taking  spatial derivative of the momentum equation \eqref{Hydrodynamic_Flocking_eqs} yields
\begin{align}
(\pa_t+u_\al\pa_x)\left(\pa_x u_\al+\sum_{\beta\in\mathcal{I}} \phi_{\al\beta}*\rho_\beta\right)=-\pa_x u_\al\left(\sum_{\beta\in\mathcal{I}}\phi_{\al\beta}*\rho_\beta+\pa_x u_\al\right),\quad \forall \al\in \mathcal{I}.\label{EQ:na_x_u_al}
\end{align}
Thus, the ``$e$''-quantities, $e_\al:=\pa_x u_\al+\sum_{\beta} \phi_{\al\beta}*\rho_\beta$ satisfy $\pa_t e_\al +\pa_x(u_\al e_\al)=0$ and pairing it with the mass equations $\pa_t \rho_\al +\pa_x(u_\al \rho_\al)=0$ yields 
\[
\pa_t q_\al+u_\al \pa_xq_\al=0, \qquad q_\al:=\frac{e_\al}{\rho_\al}.
\]
It follows that $q_\al\geq 0$ and hence $e_\al\geq0$ are  invariant zones: if $e_\al(t=0,x) \geq 0$ for all $x\in \rt$  then
\begin{align}
\pa_x u_{\al}+\sum_{\beta\in\mathcal{I}} \phi_{\al\beta}*\rho_\beta \geq 0,\quad \forall t\geq 0.
\end{align}
Moreover, arguing along the lines of \cite[sec. 3]{ShvydkoyTadmorI} 
\begin{align*}
\pa_t\rho_\al+u_\al\pa_x\rho_\al &= - \pa_x u_\al\rho_\al
= -\left(e_\al-\sum_{\beta\in\mathcal{I}} \phi_{\al\beta}*\rho_\beta\right)\rho_\al
 = -q_\al\rho^2_\al +\rho_\al \sum_{\beta\in\mathcal{I}} \phi_{\al\beta}*\rho_\beta, 
\end{align*}
and the uniform bound $|e_\al/\rho_\al(t,\cdot)|_\infty \leq |e_\al/\rho_\al(0,\cdot)|_\infty <\infty$ reveals that $\rho_\al$ remains bounded away from vacuum.

Since $\phi_{\al\bet}$ are uniformly bounded, we obtain the  lower bound,
\begin{align}
\pa_x u_\al(x,t)\geq -\sum_{\beta\in\mathcal{I}}|\phi_{\al\beta}|_\infty M_\beta,\quad \forall (t,x)\in (\rr_+,\rt), \al\in \mathcal{I}.
\end{align}
On the other hand we can see directly from the equation \eqref{EQ:na_x_u_al} that $\pa_x u_\al$ has an upper bound for all time. Combining this with the lower bound, we have that $|\pa_x u_{\al}|_\infty\leq C<\infty$ for all time and  the existence of strong solutions follows.
\end{proof}

\subsection{Critical threshold in two-dimensional flocking dynamics}
\begin{proof}[Proof of Theorem \ref{Thm_4}]
Our purpose is to show that the derivatives $\{\pa_j\bu_{\al}^i\}$ are uniformly bounded. We proceed in four steps along the lines of \cite{HeTadmor17} for the case of two-dimensional single species dynamics. 

\medskip\noindent
\underline{Step \#1} --- the dynamics of $\diver{\bu_\al}+\sum_{\beta\in\mathcal{I}}\phi_{\al\beta}*\rho_\beta$. Differentiation of \eqref{EQ:hydrodynamic_flocking} implies that the velocity gradient matrix, $(\na \mathbf{u}_\al)_{ij}=\pa_j \bu_{\al}^i$, satisfies

\bel\label{eq:CSM}
(\na \bu_\al)_t+\bu_\al\cdot\na (\na \bu_\al)+(\na \bu_{\al})^2=-\sum_{\beta\in \mathcal
{I}} \phi_{\al\beta}*\rho_\beta \na \bu_\al+R_\al,
\eel
where the entries of the residual matrices 
\[
 (R_{\al})_{ij}:=\sum_{\beta\in \mathcal{I}}\int {\pa_j\phi_{\al\beta}(|\bx-\by|)}(\bu_{\beta}^i(\by)-\bu^i_\al(\bx))\rho_\beta(\by)\d\by,\nonumber
\]
do not exceed $|(R_{\al})_{ij}|\leq \sum_{\beta\in\mathcal{I}}|\phi'_{\al\beta}|_\infty M_\beta \cdot \delV(t)$.
The entries of the residual matrix $\{(R_{\al})_{ij}\}$ can be estimated using the exponentially decaying velocity fluctuations \eqref{eq:vexp} 
\begin{align}\label{R_bound}
|(R_{\al})_{ij}|\leq \sum_{\beta\in\mathcal{I}}|\phi'_{\al\beta}|_\infty M_\beta \cdot \delV(t) \lesssim \delV_0\cdot e^{-2\rate t}.
\end{align}

The first step is to bound the divergence: taking the trace of \eqref{eq:CSM} we find that   $\d_\al:=\nabla\cdot \bu_\al$ satisfies
\begin{align}\label{eq:CSd}
(\pa_t+ \u_\al\cdot\nabla) \d_\al + \trace{(\na\bu_\al)^2} = - \left(\sum_{\beta\in\mathcal{I}}\phi_{\al\beta}*\rho_\beta\right)\d_\al +\trace{R_\al}.
\end{align}
Arguing along the lines of  \cite{CCTT16} we invoke the mass equation and obtain the following relation, 
\begin{align*}
\trace{R}_\al= &\sum_{\beta\in\mathcal{I}}\phi_{\al\beta}* \nabla\cdot (\rho_\beta \u_\beta) - \sum_{\beta\in\mathcal{I}}\u_\al\cdot\nabla \phi_{\beta}*\rho_\beta = 
-\left(\sum_{\beta\in\mathcal{I}}\phi_{\al\beta}* \rho_\beta\right)_t -\u_\al\cdot\nabla \left(\sum_{\beta\in\mathcal{I}}\phi_{\al\beta}*\rho_\beta \right)\\
=& -\left(\sum_{\beta\in\mathcal{I}}\phi_{\al\beta}*\rho_\beta\right)',
\end{align*}
where $(\cdot)'$ denotes the material derivative, $(\cdot)':=(\pa_t+\bu_\al\cdot\na_\bx)(\cdot)$. 
Similar to \cite{HeTadmor17}, we define the following two quantities
\begin{equation}\label{eq:etasym}
\na \bu_\al=S_\al+\Omega_\al, \quad S_\al=\frac{1}{2}(\na \bu_\al +\na \bu_\al^\top), \quad \Omega_\al:=\left(\begin{array}{cc}0 & -\omega_\al\\ \omega_\al & 0 \end{array}\right),
\end{equation}
where $\omega_\al$ is the scaled vorticity $\omega_\al=\frac{1}{2}(\pa_1 \bu_{\al}^2-\pa_2 \bu_\al^1)$. The symmetric part $S_\al$ has two real eigenvalue, i.e., $\lambda_1(S_\al) \leq \lambda_2(S_\al)$. Next, we recall the identity relating the trace $\trace{(\na \bu_\al)^2}$ to the \emph{spectral gap}, $\lambda_2(S_\al)-\lambda_1(S_\al)\geq 0$, \cite[eq.(2.11)]{HeTadmor17},
\begin{align}
\trace{(\na \bu_\al)^2}\equiv \frac{\d_\al^2+\etaS^2_\al-4\omega_\al^2}{2}, \qquad \etaS_\al:=\lambda_2(S_\al)-\lambda_1(S_\al)\geq 0.
\end{align} 
Expressed in terms of $\etaS_\al$, the trace dynamics \eqref{eq:CSd} now reads
\[
\left(\d_\al+\sum_{\beta\in\mathcal{I}}\phi_{\al\beta}*\rho_\beta\right)' = \hf(4\omega_\al^2 -\etaS^2_\al)-\hf \d_\al\left(\d_\al+2\sum_{\beta\in\mathcal{I}}\phi_{\al\beta}*\rho_\beta\right).
\]
This calls for the introduction  of the new ``natural'' variable $\e_\al=\d_\al+\sum_{\beta\in\mathcal{I}}\phi_{\al\beta}*\rho_\beta$, satisfying
\begin{align}\label{eq:CSe}
 \e_\al' = \hf \left(\bigg(\sum_{\beta\in\mathcal{I}}\phi_{\al\beta}*\rho_\beta\bigg)^2 +4\omega_\al^2 -\etaS^2_\al -\e_\al^2\right). 
 \end{align}
 Our purpose is to show that $\{\bx \ | \ \e_\al(t,\bx) \geq0,\enskip\forall\al\in\mathcal{I}\}$  is invariant region of the dynamics \eqref{eq:CSe}.
 
\medskip\noindent  
\underline{Step \#2} --- bounding the spectral gap $\etaS_\al$. Consider the dynamics of the symmetric part of \eqref{eq:CSM}
\[(S_\al)_t+\bu_\al\cdot \na S_\al+S_\al^2-\frac{\omega_\al^2}{4}{\mathbb I}_{2\times 2}=-\sum_{\beta\in\mathcal{I}} \phi_{\al\beta}*\rho_\beta S_\al+R_{\al, sym},\quad R_{\al, sym}=\frac{1}{2}(R_\al+R_\al^\top).
\]
The spectral dynamics of its eigenvalues $\lambda_i(S_\al)$ is governed by 
\begin{equation}\label{eq:mui}
\lambda'_i + \lambda^2_i =\omega_\al^2 -\left(\sum_{\beta\in\mathcal{I}}\phi_{\al\beta}*\rho_\beta\right)\lambda_i +\big\langle \bs_\al^i, R_{\al,\text{sym}}\bs_\al^i\big\rangle
\end{equation}
driven by  the \emph{orthonormal} eigenpair $\{\bs_\al^1,\bs_\al^2\}$ of the symmetric $S_\al$.
Taking the difference, we find that $\etaS_\al=\lambda_2(S_\al)-\lambda_1(S_\al)\geq 0$ satisfies,
\begin{equation}\label{eq:etaS}
(\etaS_\al)' + \e_\al\etaS_\al = q_\al, \qquad q_\al:= \big\langle \bs_\al^2, R_{\al,\text{sym}}\bs_\al^2\big\rangle - \big\langle \bs_\al^1, R_{\al,\text{sym}}\bs_\al^1\big\rangle.
\end{equation}
The residual term $q_\al$ is upper-bounded by the size of the entries   $\{R_{\al,j}^i\}$ in \eqref{R_bound}, $|q_\al(t,\cdot)|_\infty \leq 2 \max_{ij} |R_{\al,j}^i(t,\cdot)|_\infty \lesssim  \delV_0\cdot e^{\displaystyle -2\rate t}$.
Hence, as long as $\e_\al(t,\cdot)$ remains positive, the spectral gap does not exceed
\begin{equation}\label{eq:etaSbd}
|\etaS_\al(t,\bx)| \leq \max_\bx |\etaS_\al(0,\bx)| + Const.\frac{\delV_0}{\rate} < C_1.
\end{equation}
The first inequality on the right follows from integration of \eqref{eq:etaS}; the second follows from the assumed bound on $|\etaS_\al(0)|\leq \frac{1}{2}C_1$ in \eqref{eq:etaCT}, and our choice of small enough $\delV_0\leq C_1$, so that $\displaystyle Const.\frac{\delV_0}{\rate}\leq \frac{1}{2}C_1$; the constant $C_1$ is yet to be determined.

\medskip\noindent
\underline{Step \#3} --- The invariance of $\e_\al(t,\cdot)\geq 0$ . We return to \eqref{eq:CSe}: expressed in terms of the lower bound 
$\sum_{\beta\in\mathcal{I}}\phi_{\al\beta}*\rho_\beta \geq \sum_{\beta\in\mathcal{I}}\phi_{\al\beta}(D_\infty)M_\bet$ we find 
\begin{equation}\label{eq:eq}
\e_\al'\geq \hf \left(b_\al^2 - \e_\al^2\right), \qquad b_\al(t,\bx):=\sqrt{\left(\sum_{\beta\in\mathcal{I}}\phi_{\al\beta}(D_\infty)M_\bet\right)^2-\etaS^2_\al(t,\bx)}.
\end{equation}
Observe that $b_\al$ are well-defined: we set 
\begin{equation}\label{eq:setC0}
C_1:=\frac{1}{\sqrt{2}} \min_\al\sum_{\beta\in\mathcal{I}}\phi_{\al\beta}(D_\infty)M_\beta,
\end{equation}
so that the upper-bound \eqref{eq:etaSbd} implies 
\[
\left(\sum_{\beta\in\mathcal{I}}\phi_{\al\beta}(D_\infty)M_\bet\right)^2-\etaS^2_\al(t,\bx) \geq \frac{1}{2}C^2_0 \ \ \leadsto \ \ 
b_\al(t,\bx) \geq \cm:=\frac{1}{\sqrt{2}}C_1>0. 
\]
Since $\e_\al'  \geq \hf((\cm)^2-\e_\al^2)= \hf(\cm-\e_\al)(\cm+\e_\al)$,  it follows that $\e_\al$ is increasing whenever $\e_\al\in (-\cm,\cm)$ and in particular, if $\e_\al(0)\geq0$, $\forall\al\in\mathcal{I}$ then $\e_\al(t,\bx)$ remains positive at later times. Thus, if the initial data are \emph{sub-critical} in the sense that \eqref{eq:eCT} holds  
\[
\e_\al(0,\bx)= \diver{\bu_\al}(0,\bx)+\sum_{\beta\in\mathcal{I}}\phi_{\al\beta}*\rho_\al(0,\bx) \geq0, \quad \forall\bx\in\rr^2,
\]
then $\e_\al(t,\cdot)\geq 0$ and $\etaS_\al(t,\cdot)$ remains bounded.

\medskip\noindent
\underline{Step \#4} --- an upper-bound of $\e_\al(t,\cdot)$. The lower-bound  $\e_\al \geq 0$ implies that the vorticity is bounded. Indeed, the anti-symmetric part of \eqref{eq:CSM} yields that the vorticity $\omega_\al$ satisfies  
\begin{equation}\label{eq:vorticity}
\omega_\al' +\e_\al\omega_\al= \hf\trace{JR_\al}, \qquad J=\left(\begin{array}{cc} 0 & -1 \\ 1 &0\end{array}\right)
\end{equation}
hence applying \eqref{R_bound} yields
\begin{equation}\label{eq:vort}
|\omega_\al|'\leq  -\e_\al|\omega_\al| + \hf|q_\al|, \qquad |q_\al(t,\cdot)|\lesssim \delV_0\cdot e^{\displaystyle -2\rate t}
\end{equation}
and we end up with same upper-bound on $\omega_\al$ as with $\etaS_\al$,
\begin{equation}\label{eq:vorbd}
|\omega_\al(t,\cdot)|_\infty \leq (\omega_\al)_+, \qquad (\omega_\al)_+:= \max_\bx |\omega_\al(0,\bx)| +  \hf C_1.
\end{equation}
Returning to \eqref{eq:CSe} we have 
\[
\e'_\al \leq \hf \Bigg(\bigg(\sum_{\beta\in\mathcal{I}}\phi_{\al\beta}*\rho_\beta\bigg)^2 +4\omega_\al^2 -\e_\al^2\Bigg) \leq \hf \Bigg(\bigg(\sum_{\beta\in\mathcal{I}}|\phi_{\al\beta}|_\infty M_\beta\bigg)^2+ 4(\omega_\al)_+^2 -\e_\al^2\Bigg),
\]
which implies that $|\e_\al(t,\cdot)|_\infty \leq (\e_\al)_+ <\infty$.
The uniform bound on $\e_\al$ implies that $\diver{\bu_\al}$ is uniformly bounded,
$|\diver{\u_\al}| \leq |\e_\al|_\infty+\sum_{\beta\in\mathcal{I}}|\phi_{\al\beta}*\rho_\beta|_\infty \leq (\e_\al)_+ + \sum_{\beta\in\mathcal{I}}|\phi_{\al\beta}|_\infty M_\beta$, and together with the bound on the spectral gap \eqref{eq:etaSbd}, it follows that the symmetric part $\{S_{\al}\}$ is bounded. 
Finally, together with the vorticity bound \eqref{eq:vorbd} it follows that $\{\pa_j \bu_\al^i\}$ are uniformly bounded which completes the proof. 
\end{proof}

\section{Multi-species aggregation dynamics}\label{sec:agg}
In this section, we prove Theorem \ref{Thm_4}. We begin by letting 
 $\overline{\bx}_\infty(t)$ denote the \emph{center of mass} at time $t$, i.e., 
\begin{equation}\label{Aggregation_eq_center_of_mass}
\overline{\bx}_\infty(t):=\frac{1}{\Min} \sum_{\al\in\mathcal{I}}\overline{\bx}_\al(t), \qquad 
\overline{\bx}_\al(t)= \int_{\rr^d} \rho_\al(t,\bx) \bx d\bx.
\end{equation}
The total  mass $M=\sum_\al \int \rho_\al(t,\bx)\dbx$ is conserved in time. Moreover, by the assumed symmetry of the $\Phi=\{\phi_{\al\bet}\}$ array, the total first moment is also conserved in time,
\begin{align*}
\ddt\sum_{\al\in\mathcal{I}}\int \rho_\al(t,\bx) {\bx} \dbx
= -\iint \sum_{\al,\beta\in\mathcal{I}}  \phi_{\al\beta}(|\bx-\by|)(\bx-\by) \rho_\beta(t,\by)\rho_\al(t,\bx)\dbx \dby=0,
\end{align*}
since the last integrand in anti-symmetric in $(\bx,\by)$. Hence the center of mass is invariant in time $\overline{\bx}_\infty(t)=\overline{\bx}_\infty(0)$.

By assumption, initial densities $\rho_\al(0)$'s are compactly supported. What distinguishes the first-order multi-species aggregation dynamics \eqref{EQ:AggregationEq_multi-groups} is the fact that the diameter of this support does not increase in time, in contrast to the possible expansion  \eqref{eq:Dbd} of $D(t)$ in the second-order flocking dynamics  \eqref{Hydrodynamic_Flocking_eqs}.

\begin{thm}[{\bf Uniformly bounded support}]\label{THM_6_1}\mbox{ }\newline
Consider a strong solution of \eqref{EQ:AggregationEq_multi-groups}, 
$\{\rho_{{\al}}(t,\cdot)\in W^1_+(\rr^d), \al \in \mathcal{I}\}$,
 subject to  compactly supported initial data $\{\rho_{\al 0}\}$. Then the diameter of its support, 
 \[
D(t):=\sup_{\bx,\by\in{\mathcal S}(t)}|\bx-\by|, \qquad
{\mathcal S}(t)=\cup_\al \mathrm{supp}\,\{\rho_\al(t,\cdot)\}
\]
does not increase in time $D(t) \leq D_0$.
\end{thm}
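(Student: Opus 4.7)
The plan is to write \eqref{EQ:AggregationEq_multi-groups} as a continuity equation and track the support through characteristics, showing that the diameter cannot grow by computing the material derivative of $|\bx-\by|^2$ at an extremal pair. Each species density satisfies $\pa_t\rho_\al+\na\cdot(\rho_\al\bu_\al)=0$ with the aggregation velocity field
\[
\bu_\al(t,\bx)\,=\,\sum_{\bet\in{\mathcal I}}\int(\by-\bx)\phi_{\al\bet}(|\bx-\by|)\rho_\bet(t,\by)\,\dby.
\]
Since $\bx\phi_{\al\bet}\in W^{1,\infty}(\rr^d)$ and the masses $M_\bet$ are conserved, $\bu_\al(t,\cdot)$ is uniformly Lipschitz, so the characteristic flows $\chi^\al_t$ exist and ${\mathcal S}(t)=\bigcup_\al \chi^\al_t(\mathrm{supp}\,\rho_{\al 0})$.

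The key geometric observation occurs at an extremal pair. Fix $t\geq 0$ and choose $\bx^*\in\mathrm{supp}\,\rho_\al(t,\cdot)$, $\by^*\in\mathrm{supp}\,\rho_\bet(t,\cdot)$ realizing $D(t)=|\bx^*-\by^*|$. For every $\bz\in{\mathcal S}(t)$ one has $|\bx^*-\bz|\leq D(t)$ and $|\by^*-\bz|\leq D(t)$, which by polarization rearrange to
\[
(\bx^*-\by^*)\cdot(\bz-\bx^*)\,\leq\,-\tfrac{1}{2}|\bz-\bx^*|^2\leq 0,\qquad (\bx^*-\by^*)\cdot(\bz-\by^*)\,\geq\,\tfrac{1}{2}|\bz-\by^*|^2\geq 0.
\]
Differentiating along the characteristics $\chi^\al_s,\chi^\bet_s$ that pass through $\bx^*,\by^*$ at time $s=t$, a direct computation gives
\[
\begin{split}
\frac{d}{ds}|\chi^\al_s(\bx_0)-\chi^\bet_s(\by_0)|^2\Big|_{s=t} = 2\sum_{\gamma\in{\mathcal I}}\int\!\!\!&\big[(\bx^*-\by^*)\cdot(\bz-\bx^*)\phi_{\al\gamma}(|\bx^*-\bz|)\\
&-(\bx^*-\by^*)\cdot(\bz-\by^*)\phi_{\bet\gamma}(|\by^*-\bz|)\big]\rho_\gamma(t,\bz)\,\dbz,
\end{split}
\]
and the two sign conditions above, combined with the non-negativity of $\phi_{\al\gamma},\phi_{\bet\gamma},\rho_\gamma$, force each integrand to be pointwise non-positive.

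Writing $D^2(t)$ as a maximum over the \emph{fixed} compact set ${\mathcal S}_0\times{\mathcal S}_0$ via characteristics, Danskin's envelope theorem yields $\tfrac{d^+}{dt}D^2(t)\leq 0$, hence $D(t)\leq D_0$. The main technical point is to justify this envelope step when ${\mathcal I}$ is possibly infinite and the extremal pair need not be unique; this is handled by a standard upper-semicontinuity argument using the Lipschitz regularity of the flow. Alternatively, one may bypass the envelope step via the equivalent \textbf{half-space invariance} formulation: the same inner-product calculation shows $\bu_\al(t,\bx)\cdot\bn\leq 0$ at every boundary point of any closed half-space $H$ containing ${\mathcal S}(t)$, so characteristics cannot exit $H$; intersecting over all such half-spaces gives ${\mathcal S}(t)\subseteq\mathrm{conv}({\mathcal S}_0)$ and therefore $D(t)\leq\mathrm{diam}(\mathrm{conv}({\mathcal S}_0))=D_0$.
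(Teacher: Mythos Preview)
Your proof is correct but follows a genuinely different route from the paper. The paper does not work with characteristics or extremal pairs; instead it introduces the $p$-moment functionals
\[
W_p(\rho(t)):=\sum_{\al,\bet\in{\mathcal I}}\iint|\bx-\by|^p\rho_\al(t,\bx)\rho_\bet(t,\by)\,\dbx\dby,
\]
differentiates them, and uses the convexity inequality $|\bw-\bv|^p\geq|\bw|^p-p|\bw|^{p-2}\langle\bw,\bv\rangle$ together with a symmetrization $(\al,\bx)\leftrightarrow(\gamma,\bz)$ to show $\tfrac{d}{dt}W_p\leq 0$; the conclusion $D(t)\leq D_0$ then follows by letting $p\uparrow\infty$. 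Your approach is the more classical pointwise/characteristic argument: it is arguably more elementary, avoids the $p\to\infty$ limit, and in its half-space form yields the slightly stronger containment ${\mathcal S}(t)\subseteq\mathrm{conv}({\mathcal S}_0)$. The paper's $W_p$ argument, on the other hand, dovetails naturally with the rest of the section---the same computation with $p=2$ gives the quantitative decay of $\delta D(t)$ needed in the subsequent theorem, so the two results share a single mechanism rather than requiring separate arguments. Note also that your route needs Lipschitz velocity fields to set up the flow maps (which you invoke via $\bx\phi_{\al\bet}\in W^{1,\infty}$), while the integral approach sidesteps that issue at the level of the diameter bound.
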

\begin{proof} There are various approaches to trace the diameter $D(t)$ for one-species dynamics, e.g., \cite{BertozziCarrilloLaurent09, CarrilloDiFrancescoFigalliLaurentSlepcev11}. Here we proceed by considering the $p$-weighted diameter ($p$-Wasserstein metric), 
\[
W_p(\rho(t)):= \iint \sum_{\al,\bet\in{\mathcal I}}|\bx-\by|^p\rho_\al(t,\bx)\rho_\bet(t,\by)\dbx\dby.
\]
We abbreviate ${\D}{\sf m}_{\al\bet\gam}(t,\bx,\by,\bz)=\rho_\gam(t,\bz)\rho_\al(t,\bx) \rho_\bet(t,\by)\dbx\dby\dbz$. Differentiation yields
\begin{equation}\label{eq:Wp}
\begin{split}
\hf\ddt &W_p(\rho(t)) = \hf\iint \sum_{\al,\bet\in{\mathcal I}}|\bx-\by|^p\big(\pa_t \rho_\al(t,\bx)\rho_\bet(t,\by)+\rho_\al(t,\bx)\pa_t \rho_\bet(t,\by)\big)\dbx\dby\\
& = -\iiint  \sum_{\al,\bet,\gam \in{\mathcal I}}p |\bx-\by|^{p-2}\langle (\bx-\by), (\bx-\bz)\rangle\phi_{\al\gam}(|\bx-\bz|){\D}{\sf m}_{\al\bet\gam}(t,\bx,\by,\bz).
\end{split}
\end{equation}

The convexity of $|\cdot|^p$ implies
$|\bw-\bv|^p \geq |\bw|^p-p|\bw|^{p-2}\langle \bw,\bv\rangle$ which in turn, setting $\bw=\bx-\by$ and $\bv=\bx-\bz$, shows  that the last integral does not exceed 
\[
\begin{split}
-\iiint  & \sum_{\al,\bet,\gam \in{\mathcal I}}p|\bx-\by|^{p-2}\langle (\bx-\by), (\bx-\bz)\rangle\phi_{\al\gam}(|\bx-\bz|){\D}{\sf m}_{\al\bet\gam}(t,\bx,\by,\bz)\\
& \leq \iiint  \sum_{\al,\bet,\gam \in{\mathcal I}}\left(|\bz-\by|^p-|\bx-\by|^p\right) \phi_{\al\gam}(|\bx-\bz|){\D}{\sf m}_{\al\bet\gam}(t,\bx,\by,\bz)\\
& =  \iiint \sum_{\al,\bet,\gam \in{\mathcal I}}|\bz-\by|^p \phi_{\al\gam}(|\bx-\bz|){\D}{\sf m}_{\al\bet\gam}(t,\bx,\by,\bz)\\
 & \ \ - \iiint \sum_{\al,\bet,\gam \in{\mathcal I}}|\bx-\by|^p \phi_{\al\gam}(|\bx-\bz|){\D}{\sf m}_{\al\bet\gam}(t,\bx,\by,\bz)=: I + II.
\end{split}
\]
Now exchange $\al \leftrightarrow \gam$ and $\bx \leftrightarrow \bz$ in $I$  to conclude that $I+II=0$,  hence $W_p(\rho(t)) \leq W_p(\rho(0))$. In particular, letting $p\uparrow \infty$ yields the desired result $D(t)\leq D_0$.
\end{proof}
The  case $p=2$ deserves special attention: in this case, we can quantify the \emph{strict} decay rate of $W_2(\rho(t))$ in term of the connectivity of the
communication array $\Phi(r)$.  
\begin{thm}[{\bf Decay of weighted diameter}]\label{THM_6_2}\mbox{ }\newline
Consider a strong solution of \eqref{Hydrodynamic_Flocking_eqs}, 
$\{\rho_{{\al}}(t,\cdot)\in W^1_+(\rr^d), \al \in \mathcal{I}\}$,
 subject to  compactly supported initial data $\rho_{\al 0}$ and communication array
 $\Phi_0=\{\phi_{\al\bet}(D_0)\}_{\al,\bet\in{\mathcal I}}$.
Then the weighted diameter  $\delD(t)$ satisfies
\begin{equation}\label{Second_Moment_time_evolution}
\delD(t) \leq e^{\displaystyle -2\zetaM\lambda_2(\DelMP_0)t} \cdot\delD(0) ,\quad \delD(t) =\sum_{\al,\bet\in\mathcal{I}}\iint  |\bx-\by|^2 \rho_\al(t,\bx)\rho_\bet(t,\by) \dbx\dby.
\end{equation}
\end{thm}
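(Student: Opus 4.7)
The plan is to differentiate $\delD(t)$ in time, reduce the result to a single weighted sum of $|\bx-\by|^2\phi_{\al\bet}\rho_\al\rho_\bet$ by exploiting the assumed symmetry of $\Phi$, then invoke two earlier ingredients: the non-expansion of the support $D(t)\leq D_0$ from Theorem \ref{THM_6_1}, and the weighted Poincar\'e inequality for vector-functions in Lemma \ref{lem:Poinv}.

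\textbf{Step 1: Differentiation.} Substituting \eqref{EQ:AggregationEq_multi-groups} and using the symmetry $(\al,\bx)\leftrightarrow(\bet,\by)$ of the integrand $|\bx-\by|^2$,
\[
\tfrac12\ddt\delD(t)=\sum_{\al,\bet\in\mathcal I}\iint |\bx-\by|^2\,\partial_t\rho_\al(t,\bx)\rho_\bet(t,\by)\dbx\dby
=\sum_{\al,\bet,\gam}\iint |\bx-\by|^2\,\na_\bx\!\cdot\!\big((\bx\phi_{\al\gam})*\rho_\gam\cdot\rho_\al\big)\rho_\bet\dbx\dby.
\]
One integration by parts in $\bx$ using $\na_\bx|\bx-\by|^2=2(\bx-\by)$ converts this into a triple integral over $(\bx,\by,\bz)$. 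Exchanging the roles $(\al,\bx)\leftrightarrow(\gam,\bz)$ (legal because $\phi_{\al\gam}=\phi_{\gam\al}$) and averaging yields the expected clean form
\[
\ddt\delD(t)=-2\sum_{\al,\bet\in\mathcal I}\iint |\bx-\by|^2\phi_{\al\bet}(|\bx-\by|)\rho_\al(t,\bx)\rho_\bet(t,\by)\dbx\dby.
\]
Equivalently, writing $\delD(t)=2M\sum_\al\int|\bx-\overline{\bx}_\infty|^2\rho_\al\dbx$ (a Koenig-type identity that follows from the conservation of $\overline{\bx}_\infty$) and differentiating this second-moment form, the symmetrization step above is slightly cleaner.

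\textbf{Step 2: Freezing the kernel at $D_0$.} By Theorem \ref{THM_6_1}, the union of supports has non-expanding diameter, $D(t)\leq D_0$. Since every $\phi_{\al\bet}$ is radial and decreasing, for $(\bx,\by)\in\mathcal S(t)\times\mathcal S(t)$ one has $\phi_{\al\bet}(|\bx-\by|)\geq\phi_{\al\bet}(D(t))\geq\phi_{\al\bet}(D_0)$. Hence, using only the off-diagonal ($\al\neq\bet$) contributions as a lower bound (in the spirit of Remark \ref{rem:dealign}),
\[
\ddt\delD(t)\leq -2\sum_{\al\neq\bet}\phi_{\al\bet}(D_0)\iint|\bx-\by|^2\rho_\al(t,\bx)\rho_\bet(t,\by)\dbx\dby.
\]

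\textbf{Step 3: Applying the weighted Poincar\'e inequality.} With the choice of weight functions $\wrho_\al=\rho_\al(t,\cdot)$ (of mass $M_\al$) and vector-function $u_\al(\bx)=\bx$ (applied componentwise and summed), Lemma \ref{lem:Poinv} with array $\APhi=\Phi_0=\{\phi_{\al\bet}(D_0)\}$ gives
\[
\sum_{\al\neq\bet}\phi_{\al\bet}(D_0)\iint|\bx-\by|^2\rho_\al(t,\bx)\rho_\bet(t,\by)\dbx\dby\ \geq\ \lambda_2(\DelMP_0)\frac{\zetaM}{M}\,\delD(t).
\]
Combining Steps 2 and 3 yields the Gronwall-type differential inequality
\[
\ddt\delD(t)\leq -2\zetaM\lambda_2(\DelMP_0)\cdot\frac{\delD(t)}{1},
\]
and integration gives the stated exponential decay \eqref{Second_Moment_time_evolution}.

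\textbf{Expected difficulty.} Nothing here is deep; the main care is in Step 1, where one must keep track of three species indices $(\al,\bet,\gam)$ and perform the symmetrization correctly so that the residual reduces to the clean pairwise form. The use of $D(t)\leq D_0$ in Step 2 is crucial and relies on the first-order nature of the aggregation model (in contrast to the flocking case where the supports can spread, cf.\ \eqref{eq:Dbd}); this is what allows us to freeze the rate at the \emph{initial} communication array $\Phi_0$ rather than having to iterate on a growing diameter as in the proof of Theorem \ref{THM_1}.
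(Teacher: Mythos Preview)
Your approach is essentially the same as the paper's: differentiate, symmetrize, freeze the kernel at $D_0$ via Theorem \ref{THM_6_1}, then apply Lemma \ref{lem:Poinv} with $u_\al(\bx)=\bx$. The paper carries this out through the second-moment form you mention parenthetically (the Koenig identity $\delD(t)=2M\sum_\al\int|\bx|^2\rho_\al\dbx-2|\sum_\al\overline{\bx}_\al|^2$), which makes the symmetrization a two-index rather than three-index computation.

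One bookkeeping slip: your ``clean form'' in Step 1 is off by a factor of $M$. After the $(\al,\bx)\leftrightarrow(\gam,\bz)$ symmetrization the integrand collapses to $|\bx-\bz|^2\phi_{\al\gam}(|\bx-\bz|)$, and the remaining $\bet,\by$ sum/integral contributes $\sum_\bet\int\rho_\bet\dby=M$, so the correct identity is
\[
\ddt\delD(t)=-2M\sum_{\al,\bet}\iint |\bx-\by|^2\phi_{\al\bet}(|\bx-\by|)\rho_\al\rho_\bet\,\dbx\dby.
\]
This extra $M$ is exactly what cancels the $1/M$ in the Poincar\'e constant $\lambda_2(\DelMP_0)\zetaM/M$ from Lemma \ref{lem:Poinv}, giving the stated rate $2\zetaM\lambda_2(\DelMP_0)$; without it your Steps 2--3 would produce a rate too small by $M$. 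The odd ``$\delD(t)/1$'' in your final inequality suggests you sensed a missing factor somewhere---that is where it went.
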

\begin{proof}
We begin with computing the time evolution  of $\delD(t)=W_2(\rho(t))$ in \eqref{eq:Wp}: the special case $p=2$ yields, upon exchange $\bx\leftrightarrow \bz$, 
\[
\begin{split}
\ddt\left(\sum_{\al,\bet\in\mathcal{I}}\right.&\left.\iint  |\bx-\by|^2\rho_\al(t,\bx)\rho_\bet(t,\by)\dbx\dby\right) \\
& =-2M\sum_{\al,\beta\in\mathcal{I}} \iint \phi_{\al\beta}(|\bx-\by|)\langle (\bx-\by),2\bx\rangle\rho_\beta (t,\by)\rho_\al(t,\bx)\dbx \dby.
\end{split}
\]
Alternatively, since the center of mass  $\sum_\al \int \rho_\al(t,\bx) \bx \dbx$ is invariant in time,  the  change of the weighted diameter $\displaystyle \ddt \delD(t)$ equals the rate of the total second moment $\displaystyle \sum_{\al\in\mathcal{I}}\int  |\bx|^2 \rho_\al(t,\bx) \dbx$; arguing along the lines of the proof of theorem \ref{THM_4_1} we find
\begin{align*}
\frac{1}{2\Min}\ddt\left(\sum_{\al,\bet\in\mathcal{I}}\right.&\left.\iint  |\bx-\by|^2\rho_\al(t,\bx)\rho_\bet(t,\by)\dbx\dby\right) = \ddt \left(\sum_{\al\in\mathcal{I}}\int  |\bx|^2 \rho_\al(t,\bx) \dbx\right)\\
=&-\sum_{\al,\beta\in\mathcal{I}} \iint \phi_{\al\beta}(|\bx-\by|)\langle (\bx-\by),2\bx\rangle\rho_\beta (t,\by)\rho_\al(t,\bx)\dbx \dby\\
=&-\sum_{\al,\beta\in\mathcal{I}}\ \iint \phi_{\al\beta}(|\bx-\by|)|\bx-\by|^2\rho_\al(t,\bx)\rho_\beta (t,\by)\dbx \dby\\
\leq&-\sum_{\al,\beta\in\mathcal{I}}\phi_{\al\beta}(D_0) \iint|\bx-\by|^2\rho_\al(t,\bx)\rho_\beta (t,\by)\dbx \dby.
\end{align*}
The last step follows from $|\bx-\by|\leq D(t)\leq D_0$ and recalling that $\phi_{\al\bet}$ are decreasing.
Using the vector version of Poincar\'{e} inequality \eqref{eq:alphaneqbeta} with $(\bu_\al(\bx),\bu_\bet(\by))=(\bx,\by)$ we conclude
\begin{align*}
\frac{1}{2\Min}\ddt\sum_{\al,\bet\in\mathcal{I}}\iint |\bx-\by|^2&\rho_\al(t,\bx)\rho_\bet(t,\by)\dbx\dby\\
\leq& -\lambda_2(\DelMP(D_0))\frac{\zetaM}{\Min}\sum_{\al,\bet\in\mathcal{I}}\iint|\bx-\by|^2\rho_\al(t,\bx)\rho_\bet(t,\by) \dbx\dby.
\end{align*}
The bound \eqref{Second_Moment_time_evolution} follows.
\end{proof}

\noindent
{\bf Acknowledgments}. We thank Ruiwen Shu for reading out manuscript and offering the proof for the  improved bound in theorem \ref{THM_4_2}. Research was supported in part by NSF grants 	DMS16-13911, RNMS11-07444 (KI-Net) and ONR grant N00014-1812465.


\end{document}